\theoremstyle{plain}
\newtheorem{theorem}{Theorem}[section]
\newtheorem{lemma}[theorem]{Lemma}
\newtheorem{corollary}[theorem]{Corollary}
\newtheorem{proposition}[theorem]{Proposition}
\newtheorem{definition}[theorem]{Definition}
\theoremstyle{remark}
\newtheorem{remark}[theorem]{Remark}
\def\tr{{\rm tr}}
\def\cH{\mathcal H}
\def\Z{\mathbb Z}
\def\F{\mathbb F}
\def\C{\mathbb{C}}
\def\R{\mathbb{R}}
\def\N{\mathbb{N}}
\def\T{\mathbb{T}}
\def\fix{{\#_{\rm 1}}}
\title{Examples of hyperlinear groups without factorization property}
\author{Andreas Thom}
\address{Andreas Thom, Mathematisches Institut der Universit\"at G\"ottingen,
Bunsenstr. 3-5, D-37073 G\"ottingen, Germany}
\email{thom@uni-math.gwdg.de}
\urladdr{http://www.uni-math.gwdg.de/thom}
\subjclass{20F65, 22D25, 46M07, 46L05}
\begin{document}

\begin{abstract}
In this note we give an example of a group which is locally embeddable into finite groups (in particular it is initially subamenable, sofic and hence hyperlinear) but does not have Kirchberg's factorization property. This group provides also an example of a sofic Kazhdan group which is not residually finite, answering a question from \cite{MR2178069}. We also give an example of a group which is not initially subamenable but hyperlinear. Finally, we point out a mistake in \cite[Corollary 1.2 (v)$\Rightarrow$(i)]{MR1282231} and \cite[Corollary 7.3(iii)]{MR1218321} and provide an example of a group which does not have the factorization property and is still a subgroup of a connected finite-dimensional Lie group.
\end{abstract}

\maketitle

\section{Introduction}

In \cite{MR2072092}, N.\ Ozawa observed that there are no hyperlinear groups known which do not have E.\ Kirchberg's factorization property (see below for definitions). In this note, we give examples of groups with this behaviour. As a consequence, the maximal group $C^*$-algebra of such a group cannot have the local lifting property. To the best knowledge of the author, although this should be so for large classes of groups, no such group had been constructed before. Another example  yields a group which is hyperlinear but not \emph{initially subamenable}, a concept that goes back to M.\ Gromov, see \cite{gromov} for more information and Section \ref{secex} for a definition. The possibly weaker notion of being \emph{sofic} (which also goes back to \cite{gromov}, see also \cites{MR1803462,MR2178069,MR2220572} and Definition \ref{sofic-group}) is immediately connected with all that.
It has been asked by various people whether all hyperlinear groups are sofic and whether all sofic groups are initially subamenable. The first question has the flavour of A.\ Mal'cev's result that all finitely generated linear groups are residually finite. Concerning the second question, Gromov states in \cite[p.\ 157]{gromov} that "it may (?) happen" that a group is sofic but not initially subamenable. We do not have an answer, but our result gives that at least one of these interesting questions has a negative answer. 
\vspace{0.1cm}

The question here is about different qualities of matricial or combinatorial approximation to group laws. In the theory of von Neumann algebras one frequently encounters the following kind of approximation: Let $G$ be a finitely generated group with a fixed generating set $S \subset G$; this amounts to fixing a monoid surjection $w\colon F_S \to G$ from the free monoid on symbols $\{s,s^* \mid s\in S\}$, so that $w(s^*)=w(s)^*$. Given an integer $n$ and $\varepsilon >0$, one wants to find complex matrices $\{u_s\}_{s \in S}$ (of some arbitrary size $k$), such that for all words $t \in F_S$ of length $\leq n$ we have that $\tr(\overline{w}(t))$ is $\varepsilon$-close to zero whenever $w(t)$ is non-trivial and $\varepsilon$-close to one whenever $w(t)$ is trivial in $G$. Here, $\overline{w}$ denotes the natural monoid homomorphism $\overline{w} \colon F_S \to M_k \C$ satisfying $\overline{w}(s) = u_s$ and $\overline{w}(s^*)=u_s^*$. As usual, $\tr \colon M_k \C \to \C$ denotes the normalized trace on the algebra of complex $k \times k$-matrices. The matrices $\{u_s\}_{s \in S}$ are usually called $(\varepsilon,n)$-microstates. Sometimes, one assumes the existence of some universal bound on the operator norms of the matrices $u_s$, but this is not necessary.

A finitely generated group which admits $(\varepsilon,n)$-microstates for all $n \in \N$ and all $\varepsilon >0$ is said to be \emph{hyperlinear}. Whereas the study of microstates was initiated in work of D. Voiculescu on free probability theory, the name \emph{hyperlinear} goes back to F. R\u adulescu.
The class of hyperlinear groups includes all residually amenable groups and there is no group known, which is not hyperlinear. As a matter of fact, if $G$ is residually amenable, then the approximations above can be choosen to be induced (on the generators of $G$) by linear maps
$$\phi_k \colon C^* G \to M_{n_k} \C$$
which are unital, completely positive and satisfy
\begin{enumerate}
\item[(i)] $\lim_{k \to \infty} \|\phi_k(a)\phi_k(b) - \phi_k(ab)\|_2=0$, for all $a,b \in C^*G$, and
\item[(ii)] $\lim_{k \to \infty} |\tau(a) - \tr \circ \phi_k(a)| = 0$, for all $a \in C^*G$.
\end{enumerate}
Here, $C^*G$ denotes the maximal group $C^*$-algebra and $\tau\colon C^*G \to \C$ denotes the canonical trace. A linear map $\phi_k$ is said to be completely positive if the induced linear maps
$$1_{M_n\C} \otimes \phi_k \colon M_n \C \otimes_{\C} C^* G \to M_n \C \otimes_{\C} M_{n_k} \C$$
respect the cone of positive operators for all $n \geq 1$. The notion of complete positivity and complete boundedness appears naturally in the realm of $C^*$-algebras and is naturally embedded into the subject of operator space theory. For 
background on the theory of operator spaces we refer to the book by G.\ Pisier \cite{MR2006539}.

Following E.\ Kirchberg (\cite {MR1282231}), a group $G$ is said to have the \emph{factorization property} if it is hyperlinear and the approximation is induced by unital completely positive maps as described above. The factorization property can be defined in various ways and has been studied in detail in \cite{MR1218321, MR1282231}, see also \cite{MR2022373}.
Kirchberg proved the following remarkable and beautiful result:

\begin{theorem}[Kirchberg, see \cite{MR1282231}] \label{kirchberg}
For a Kazhdan group $G$, the following properties are equivalent: 
\begin{enumerate}
\item[(i)] $G$ has the factorization property.
\item[(ii)] $G$ is residually finite.
\item[(iii)] $G$ is isomorphic to a subgroup of the unitary group of the hyperfinite $II_1$- 
factor.
\end{enumerate}
\end{theorem}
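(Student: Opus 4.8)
The plan is to establish $(ii)\Rightarrow(i)$ and $(ii)\Rightarrow(iii)$ for arbitrary countable groups, and then to prove the two converses $(i)\Rightarrow(ii)$ and $(iii)\Rightarrow(ii)$ by a single argument in which property (T) is the essential ingredient; together these four implications give $(i)\Leftrightarrow(ii)\Leftrightarrow(iii)$. Note that I deliberately route everything through $(ii)$, because the naive implication $(iii)\Rightarrow(i)$ fails: an abstract subgroup $G\le U(R)$ only lets one approximate the trace $\tau_R\circ\iota$ induced from $R$, which need not be the canonical trace $\tau$.

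For $(ii)\Rightarrow(i)$ I would take finite quotients $q_i\colon G\twoheadrightarrow F_i$ separating the points of $G$ and use the left regular representations $\lambda_{F_i}$: the $*$-homomorphisms $\phi_i=\lambda_{F_i}\circ q_i\colon C^*G\to M_{|F_i|}\C$ are unital and completely positive, are exactly multiplicative, and satisfy $\tr(\phi_i(g))=\delta_{q_i(g),e}\to\delta_{g,e}=\tau(g)$, so $G$ has the factorization property. For $(ii)\Rightarrow(iii)$ I would instead form the infinite tensor product $N=\bar\bigotimes_i(M_{|F_i|}\C,\tr)\cong R$ and let $g\mapsto\bigotimes_i\lambda_{F_i}(q_i(g))$; since $\tr_N$ of this element equals $\prod_i\delta_{q_i(g),e}$, which is $1$ exactly when $g=e$, the homomorphism is injective and embeds $G$ into $U(R)$.

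For the two converses I would first reduce both hypotheses to a common situation. If $G$ has the factorization property, set $\beta_k=\phi_k$ and $t=\tau$. If instead $\iota\colon G\hookrightarrow U(R)$ is given, write $R$ as the closure of an increasing union of finite-dimensional subfactors with trace-preserving conditional expectations $E_k$, set $\beta_k=E_k\circ\iota\colon C^*G\to M_{d_k}\C$ and $t=\tau_R\circ\iota$. In either case $\beta_k$ is unital and completely positive, is asymptotically multiplicative in $\|\cdot\|_2$, and satisfies $\tr\circ\beta_k\to t$ pointwise, where $t$ is a tracial positive-definite function with $\mathrm{Re}\,t(g)<1$ for every $g\neq e$ (for the embedding this uses faithfulness of $\tau_R$ together with injectivity of $\iota$). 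Now I would dilate: by Stinespring, $\beta_k=V_k^*\pi_k(\cdot)V_k$ for a genuine unitary representation $\pi_k$ of $G$ on a Hilbert space $H_k$ and an isometry $V_k$ with range projection $P_k$. Asymptotic multiplicativity forces $\|(1-P_k)\pi_k(s)P_k\|_{\mathrm{HS}}=o(\sqrt{d_k})$ for each generator $s$, so the normalized projection $P_k/\|P_k\|_{\mathrm{HS}}$ is a $(Q,\varepsilon)$-almost invariant unit vector for the conjugation representation $g\cdot T=\pi_k(g)T\pi_k(g)^*$ of $G$ on the Hilbert--Schmidt operators on $H_k$.

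This is where property (T) enters, and it is the main obstacle: by the Kazhdan gap, for $k$ large there is a genuine invariant vector nearby, i.e.\ a Hilbert--Schmidt operator commuting with $\pi_k(G)$ and close to $P_k$; replacing it by its self-adjoint part and applying a spectral cut-off I would obtain an honest projection $Q_k\in\pi_k(G)'$ with $\|Q_k-P_k\|_{\mathrm{HS}}$ small and $\mathrm{rank}(Q_k)$ comparable to $d_k$. The restriction $\psi_k:=\pi_k|_{\mathrm{Ran}(Q_k)}$ is then a genuine finite-dimensional unitary representation, and since $Q_k$ is close to $P_k$ one gets $\tr(\psi_k(g))\to t(g)$ for every $g\in G$. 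Because $\mathrm{Re}\,t(g)<1$ for $g\neq e$, it follows that $\psi_k(g)\neq 1$ for large $k$, so $\bigcap_k\ker\psi_k=\{e\}$ and the $\psi_k$ separate the points of $G$. Each image $\psi_k(G)$ is a finitely generated linear group---recall that property (T) groups are finitely generated---hence residually finite by Mal'cev's theorem; as products and subgroups of residually finite groups are residually finite, the diagonal embedding $G\hookrightarrow\prod_k\psi_k(G)$ shows that $G$ is residually finite, proving both $(i)\Rightarrow(ii)$ and $(iii)\Rightarrow(ii)$. The delicate point is the passage from the almost invariant projection $P_k$ to the genuine invariant projection $Q_k$: it rests on the Stinespring dilation, which requires complete positivity and thus explains why mere hyperlinearity---approximation by microstates that need not be completely positive---does not suffice, consistently with the examples constructed in this note.
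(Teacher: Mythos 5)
You should first note that the paper you are reading does not actually prove this theorem: it is quoted from Kirchberg's article \cite{MR1282231}, so the only meaningful comparison is with Kirchberg's original argument. Measured against that, your treatment of the two hard implications $(i)\Rightarrow(ii)$ and $(iii)\Rightarrow(ii)$ is essentially Kirchberg's proof and is sound in outline: the reduction of both hypotheses to a common family of unital completely positive, asymptotically multiplicative maps $\beta_k$ with $\tr\circ\beta_k\to t$ and $\mathrm{Re}\,t(g)<1$ for $g\neq e$, the Stinespring dilation, the almost invariant projection $P_k$ for the conjugation representation on Hilbert--Schmidt operators, the Kazhdan gap producing a nearby projection $Q_k\in\pi_k(G)'$ (the spectral cut-off can be justified by a Hoffman--Wielandt estimate, and the trace comparison $\tr\psi_k(g)\to t(g)$ by a rank/trace-class bound on $Q_k-P_k$), and finally Mal'cev's theorem applied to the finitely generated linear images $\psi_k(G)$. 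The implication $(ii)\Rightarrow(i)$ via left regular representations of a nested separating family of finite quotients is also correct.

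There is, however, a genuine gap in your proof of $(ii)\Rightarrow(iii)$: the element $\bigotimes_i\lambda_{F_i}(q_i(g))$ simply does not exist in $N=\overline{\bigotimes}_i\left(M_{|F_i|}\C,\tr\right)$ when $q_i(g)\neq e$ for infinitely many $i$, and this is the typical case (for $G=\Z$ with $F_i=\Z/2^i\Z$ the generator is nontrivial in every quotient). Indeed, the partial products $w_n=\lambda_{F_1}(q_1(g))\otimes\cdots\otimes\lambda_{F_n}(q_n(g))\otimes 1\otimes\cdots$ satisfy $\tau(w_m^*w_n)=\prod_{m<i\leq n}\tr\left(\lambda_{F_i}(q_i(g))\right)=0$ for $m<n$, so the vectors $w_n\Omega$ in the GNS space are pairwise orthogonal unit vectors: the sequence $(w_n)$ has no strong limit, and its weak limit is the zero operator, which is not a unitary. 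Your trace formula $\tr_N=\prod_i\tr$ is exactly the symptom of this collapse. The repair is to replace the tensor product by the von Neumann algebra direct sum: let $A$ be the bounded-sequence direct sum of the $M_{|F_i|}\C$ with faithful normal tracial state $\sum_i w_i\tr_i$ for weights $w_i>0$, $\sum_i w_i=1$. Its unitary group is the \emph{full} product $\prod_i U(|F_i|)$, so $g\mapsto\left(\lambda_{F_i}(q_i(g))\right)_i$ embeds $G$ into $U(A)$; and $A$ embeds unitally and trace-preservingly into $R$ by choosing orthogonal projections $p_i\in R$ with $\tau_R(p_i)=w_i$, $\sum_i p_i=1$, and unital embeddings $M_{|F_i|}\C\hookrightarrow p_iRp_i\cong R$. (Equivalently: a countable residually finite group embeds into its profinite completion, a compact metrizable group, and every compact metrizable group embeds into $U(R)$ by Peter--Weyl together with the same direct-sum construction.) With this repair, your four implications are complete.
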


Recall, a group is \emph{residually finite} if every non-trivial element can be mapped non-trivially to a finite group. For background on Kazhdan's property (T), see \cite{bekval}. We will use as a definition that $1$-cocycles into unitary representations are bounded. More precisely:
\begin{definition}
Let $G$ be a group. The group $G$ is said to have Kazhdan's property $(T)$ if for every unitary Hilbert space representation $\pi\colon G \to U(H)$ and every map 
$c\colon G \to H$ which satisfies $c(gh) = \pi(g) c(h) + c(g)$, there exists a constant $C \in \R$, such that $\|c(g)\| \leq C$, for all $g \in G$.
\end{definition}
C.\ Champetier (see \cite{MR1760424}) showed the existence of Kazhdan groups with no subgroups of finite index. These groups cannot have Kirchberg's factorization property and it remains an intriguing question to decide whether they can be hyperlinear.
Since these groups appear as inductive limits of hyperbolic groups, the question whether all hyperbolic groups are residually finite is immediately linked with this problem. 

There are two sources of simple groups with Kazhdan's property (T). Such groups appear for example as lattices in certain Kac-Moody groups, see \cite{caprem}. Much earlier, it was also shown by Gromov (\cite{MR919829}) that every hyperbolic group surjects onto a Tarski monster, i.e.\ every proper subgroup of this quotient is finite cyclic; in particular: this quotient group is simple and is a Kazhdan group if the hyperbolic group was a Kazhdan group.

\vspace{0.1cm}

In this note we describe explicitly a finitely generated hyperlinear (in fact locally embeddable into finite groups (LEF), hence sofic) Kazhdan group which is not residually finite and hence does not have the factorization property. In fact, we show that our example admits a surjective and non-injective endomorphism (i.e.\ it is non-hopfian) and use A.\ Mal'cev's famous result:

\begin{theorem}[Mal'cev, see \cite{MR0003420}] \label{malcev}
Every finitely generated residually finite group is hopfian.
\end{theorem}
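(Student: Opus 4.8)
The plan is to show directly that every surjective endomorphism $\phi\colon G \to G$ of a finitely generated residually finite group $G$ is automatically injective; since hopfian means precisely that surjective self-endomorphisms are isomorphisms, this suffices. The two ingredients are the finiteness of the set of subgroups of any fixed finite index (which is where finite generation enters) and residual finiteness, which I will use to trap a hypothetical nontrivial element of $\ker\phi$.

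First I would record the standard counting fact: for each $n$, a finitely generated group has only finitely many subgroups of index $n$. Indeed, a subgroup $H$ of index $n$ determines the action of $G$ on the $n$ cosets $G/H$, hence a homomorphism $G \to S_n$, and $H$ is recovered as the stabilizer of the trivial coset; since $G$ is generated by the finite set $S$, there are at most $(n!)^{|S|}$ such homomorphisms, and therefore only finitely many subgroups of index $n$. Write $\mathcal{S}_n$ for this finite set.

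Next I would analyze how $\phi$ acts on $\mathcal{S}_n$. For $H \in \mathcal{S}_n$, surjectivity of $\phi$ yields a bijection $G/\phi^{-1}(H) \to G/H$, so $\phi^{-1}(H)$ again has index $n$, and $H \mapsto \phi^{-1}(H)$ is a self-map of $\mathcal{S}_n$. Because $\phi$ is surjective one has $\phi(\phi^{-1}(H)) = H$, so this self-map is injective; being an injective self-map of a finite set, it is a bijection. This is the step I expect to be the crux: it is exactly this bijectivity that allows residual finiteness to bite, and it forces both finite generation (finiteness of $\mathcal{S}_n$) and surjectivity of $\phi$ to be used simultaneously.

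Finally I would combine these observations. Suppose $g \in \ker\phi$ with $g \neq 1$. By residual finiteness there is a homomorphism from $G$ onto a finite group sending $g$ to a nontrivial element; its kernel is a normal subgroup $N$ of some finite index $n$ with $g \notin N$. Since $H \mapsto \phi^{-1}(H)$ is a bijection of $\mathcal{S}_n$, there is some $M \in \mathcal{S}_n$ with $\phi^{-1}(M) = N$. But $\phi(g) = 1 \in M$ forces $g \in \phi^{-1}(M) = N$, contradicting $g \notin N$. Hence $\ker\phi$ is trivial, $\phi$ is an isomorphism, and $G$ is hopfian.
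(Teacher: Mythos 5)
Your proof is correct and complete: the finiteness of the set $\mathcal{S}_n$ of index-$n$ subgroups, the fact that $H \mapsto \phi^{-1}(H)$ is an injective (hence bijective) self-map of $\mathcal{S}_n$ for surjective $\phi$, and the final trapping of a nontrivial $g \in \ker\phi$ inside $N = \phi^{-1}(M)$ are all sound. Note that the paper does not prove this statement at all --- it is quoted as Mal'cev's classical theorem with only a citation --- and what you have written is precisely the standard textbook argument for it, so there is nothing in the paper to compare against; your write-up would serve as a self-contained substitute for the citation.
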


Although we did not define all the notions used so far, we try to clarify the situation by drawing a diagram of known implications between the concepts involved. We will recall all relevant definitions along the way. Throughout, all groups are assumed to be finitely generated.
$$
\xymatrix{
\mbox{factorization property} \ar@{=>}[dd]  \ar@/^2pc/@{=>}[rr]^{(T)} & \mbox{residually amenable} \ar@{=>}[d] \ar@{=>}[l] & \mbox{residually finite}\ar@{=>}[d] \ar@{=>}[l]\\
& \mbox{initially subamenable} \ar@{=>}[d]  \ar@/_1pc/@{=>}[u]_{f.p.}& \mbox{LEF} \ar@{=>}[l] \ar@/_1pc/@{=>}[u]_{f.p.} \\
\mbox{hyperlinear} \ar@/_1pc/@{->}[r]_{?} & \mbox{sofic} \ar@{=>}[l] \ar@/_1pc/@{->}[u]_{?}
}
$$
The decorations on the arrows indicates that additional assumptions are needed, \emph{f.p.}\ means \emph{finitely presented}.
The first main result of this article is:
\begin{theorem}
There exists a finitely generated Kazhdan group $G$ which is locally embeddable into finite groups (in particular sofic and hence hyperlinear) but does not have the factorization property.
\end{theorem}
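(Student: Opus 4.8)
The plan is to reduce the statement to the construction of a single finitely generated group $G$ carrying three features at once: it is a Kazhdan group, it is LEF, and it is \emph{non-hopfian}, i.e.\ it admits a surjective endomorphism with non-trivial kernel. Granting such a $G$, the theorem follows formally. Indeed, LEF implies initially subamenable, hence sofic, hence hyperlinear, by the chain of implications recorded in the diagram above. On the other hand, a finitely generated non-hopfian group cannot be residually finite, by Mal'cev's Theorem~\ref{malcev}. Since $G$ is a Kazhdan group, Kirchberg's Theorem~\ref{kirchberg} shows that for $G$ the factorization property is equivalent to residual finiteness; as $G$ fails to be residually finite, it cannot have the factorization property.

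The entire content therefore lies in exhibiting $G$. I would look for it among matrix groups $\mathrm{EL}_n(R)$ with $n \geq 3$ over a commutative $\F_p$-algebra $R$, for two reasons: such groups are the standard source of property $(T)$ via relative property $(T)$ and bounded elementary generation, and a ring endomorphism of $R$ induces a group endomorphism of $\mathrm{EL}_n(R)$ in a functorial way. The non-hopficity should come from a ring endomorphism $\phi \colon R \to R$ that is surjective but not injective. A crucial point is that such a $\phi$ cannot exist when $R$ is Noetherian, since a surjective endomorphism of a Noetherian commutative ring is automatically injective; so $R$ must be genuinely non-Noetherian. The model case is a polynomial ring in infinitely many variables $R = \F_p[t_i]$ equipped with a shift $\phi$ that collapses one variable. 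To keep $G$ finitely generated even though $R$ is not finitely generated, I would adjoin to $\mathrm{EL}_n(R)$ an automorphism $\sigma$ implementing a two-sided shift of the variables, obtaining a finitely generated extension, and then realize the collapsing map as the required proper self-surjection of $G$.

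For the two approximation properties I would argue as follows. Property $(T)$ should persist for the resulting group through the uniform higher-rank mechanism available for $\mathrm{EL}_n$, $n \geq 3$, over commutative rings; the delicate issue is that $(T)$ must survive passage to the infinitely related group, so one needs a form of property $(T)$ insensitive to the absence of a finite presentation. The LEF property should be read off by writing $R$ as a direct limit of \emph{finite} local quotients $R \twoheadrightarrow \bar R_m$ (for instance by truncating and nilpotently augmenting the variables), so that each $\mathrm{EL}_n(\bar R_m)$ is finite and $G$ is realized as a limit of these finite groups in the space of marked groups — local embeddability into finite groups being exactly this approximability.

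The main obstacle is the tension between LEF and the failure of residual finiteness. For finitely presented groups, LEF already forces residual finiteness (the arrows decorated \emph{f.p.}\ in the diagram), so $G$ must be finitely generated but \emph{not} finitely presented, and it is precisely in this infinitely related regime that one must still guarantee both property $(T)$ and an honest surjective, non-injective endomorphism. Verifying these two demands simultaneously — a uniform spectral gap for the finite approximants together with a genuine collapse of infinitely many relations — is the technical heart of the argument; once it is in place, the remainder is bookkeeping in the space of marked groups and an application of the two cited theorems.
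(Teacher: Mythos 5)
Your first paragraph reproduces exactly the paper's reduction: a finitely generated group that is simultaneously Kazhdan, LEF, and non-hopfian settles the theorem via Mal'cev's and Kirchberg's theorems. The gap lies in the construction you sketch for such a group, and it is more than the incompleteness you concede. You propose $G = \mathrm{EL}_n(R)\rtimes\langle\sigma\rangle$ with $R = \F_p[t_i : i \in \Z]$ and $\sigma$ the shift of variables. This group surjects onto $\Z$ (kill $\mathrm{EL}_n(R)$), and since property (T) passes to quotients and $\Z$ is not Kazhdan --- equivalently, a Kazhdan group has finite abelianization --- your $G$ can never have property (T), no matter how the spectral-gap analysis of the finite approximants turns out. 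Two further obstacles: Shalom's criterion (which makes $\mathrm{EL}_n(R)$ Kazhdan for $n \geq 2 + \dim R$) requires $R$ finitely generated, while your $R$ is not and has infinite Krull dimension; and the collapsing ring endomorphism must intertwine with $\sigma$ in order to descend to the semidirect product, yet equivariance forces any endomorphism killing one variable to kill them all, destroying surjectivity.

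The paper evades precisely these traps by relocating the non-hopfian mechanism. It works over the finitely generated Noetherian ring $\F_p[t,t^{-1}]$, so that Suslin's theorem ($EL_3 = SL_3$ over this ring), Shalom's theorem, and relative property (T) with bounded generation yield property (T) for the matrix group $G' = G_0(\F_p[t,t^{-1}])$; and the shift is never adjoined as a group element but used only as an automorphism of $G'$, namely conjugation by $\mathrm{diag}(t,1,1,1,1)$. Non-hopficity then comes not from a ring endomorphism but from dividing by half of the center: with $Z(G') \cong \oplus_{n\in\Z}\F_p t^n$ and $C$ the subgroup $\oplus_{n\geq 0}\F_p t^n$, the shift maps $C$ properly into itself, so it descends to a surjective, non-injective endomorphism of $G = G'/C$ whose kernel is a copy of $\F_p$. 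Finally, LEF is verified directly by sending $t$ to a formal root of unity $\xi$, so that $\F_p[\xi]$ is a finite ring, and quotienting $G_0(\F_p[\xi])$ by a matching finite "half" of its center. If you want to salvage your approach, this relocation of the collapse --- from an endomorphism of the ring to a central quotient that is merely shifted into itself by a ring automorphism --- is the idea you are missing; it is what allows the ring, and hence the property (T) input, to stay finitely generated.
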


This answers the question whether all hyperlinear groups could have the factorization property, see \cite[p.\ 524]{MR2072092}. As Ozawa points out (see \cite[p.\ 527]{MR2072092}), the maximal group $C^*$-algebra of $G$ cannot have the local lifting property. (Ozawa proved in \cite{MR2022373}, that there are groups whose maximal group $C^*$-algebra do not have the lifting property.) We do not go into the definition of the lifting property or the local lifting property and the relevance of this results, see \cite{MR2072092}. This example of a group also ends speculations on whether all sofic Kazhdan groups are necessarily residually finite, see \cite[Proposition $4.5$]{MR2178069}.

\vspace{0.1cm}

The second result is:

\begin{theorem}
There exists a hyperlinear Kazhdan group $K$ which is not initially subamenable.
\end{theorem}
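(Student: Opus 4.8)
The plan is to produce a single group that is finitely presented, has property $(T)$, is \emph{not} residually finite, and is nevertheless hyperlinear; the first three features will force, via a short structural argument, that the group cannot be initially subamenable. For the group itself I would take Deligne's example: fix $n \geq 2$, let $p\colon \widetilde{Sp}_{2n}(\R) \to Sp_{2n}(\R)$ be the universal covering of the connected Lie group $Sp_{2n}(\R)$, whose kernel $\ker p \cong \Z$ is central, and set $K = p^{-1}(Sp_{2n}(\Z))$. Then $K$ sits in a central extension
$$ 1 \longrightarrow \Z \longrightarrow K \longrightarrow Sp_{2n}(\Z) \longrightarrow 1, $$
$K$ is a lattice in the connected Lie group $\widetilde{Sp}_{2n}(\R)$ and hence finitely presented, and it inherits property $(T)$ from $\widetilde{Sp}_{2n}(\R)$ (property $(T)$ for a connected semisimple Lie group depends only on its Lie algebra and so is insensitive to passing to covers, while $Sp_{2n}(\R)$ has $(T)$ for $n\geq 2$). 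By Deligne's theorem the central $\Z$ has finite image in every finite quotient of $K$, so $K$ is not residually finite.

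The key structural observation, which I would isolate as a lemma, is that \emph{a finitely presented group with property $(T)$ that is initially subamenable must be residually finite.} Indeed, suppose $K$ is initially subamenable, so $K = \lim_m A_m$ in the space of marked groups with each $A_m$ amenable. Since $K$ is finitely presented, for all large $m$ every defining relator of $K$ holds in $A_m$, producing surjections $K \twoheadrightarrow A_m$; thus each such $A_m$ is a finitely generated amenable quotient of a property $(T)$ group, and a finitely generated amenable group with $(T)$ is finite, so $A_m$ is finite. Marked convergence moreover guarantees that any fixed nontrivial element of $K$ stays nontrivial in $A_m$ for all large $m$, so these finite quotients separate points and $K$ is residually finite. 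Applied to Deligne's $K$, this shows that initial subamenability would contradict Deligne's theorem; hence $K$ is not initially subamenable.

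It remains to prove that $K$ is hyperlinear, and this is the step I expect to be the main obstacle. Here I would exploit that hyperlinearity, unlike initial subamenability, is stable under central extensions by amenable groups: since $Sp_{2n}(\Z)$ is residually finite it is sofic, hence hyperlinear, and the extension above has central — in particular amenable — kernel $\Z$. Concretely, starting from finite-dimensional almost-homomorphisms of $Sp_{2n}(\Z)$ that separate points and approximate the trace, one lifts them to almost-homomorphisms of $K$ by letting the central generator act through a unitary scalar and absorbing the $\Z$-valued $2$-cocycle of the extension into an auxiliary matrix factor; passing to an ultraproduct then embeds $K$ into the unitary group of the ultrapower $R^\omega$ of the hyperfinite $II_1$ factor. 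Controlling the cocycle while keeping the maps asymptotically multiplicative and trace-preserving is the technical heart of the argument, and it is precisely the point where finite-dimensional (metric) approximation succeeds even though the exact marked-group approximation by amenable groups provably fails.
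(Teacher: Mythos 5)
Your reduction of ``not initially subamenable'' to ``finitely presented, property (T), not residually finite'' is sound and is in fact the same argument the paper uses, and your background claims about Deligne's group $K=p^{-1}(Sp_{2n}(\Z))\subset \widetilde{Sp}_{2n}(\R)$ (finitely presented lattice, property (T), not residually finite) are correct. The gap is the hyperlinearity step, and it is not a fillable technicality: you need that a central extension $1\to\Z\to K\to Q\to 1$ with $Q$ hyperlinear forces $K$ to be hyperlinear, and this is not a known theorem --- it is an open problem. Indeed, whether Deligne's group itself is hyperlinear (or sofic) is a well-known open question; if your ``stability under central extensions with amenable kernel'' were available, that question would be settled. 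What is actually true, and what the paper proves (Lemma \ref{embed}), is the \emph{reverse} implication: a quotient of a hyperlinear group by a central subgroup is hyperlinear. This asymmetry is exactly what dictates the paper's choice of example: there the residually finite group $K_0(\Z[1/p])$ sits \emph{on top} (residual finiteness via reduction mod $q$, $q$ prime to $p$), and the non-residually-finite Kazhdan group is the central \emph{quotient} $K_0(\Z[1/p])/\Z$. In your example the residually finite group $Sp_{2n}(\Z)$ is the quotient, which is the wrong side of the extension.

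To see concretely why your lifting sketch fails: writing $LK$ as a direct integral over $\theta\in\hat{\Z}\cong\T$ of twisted group von Neumann algebras $L_{\omega_\theta}[Q]$, hyperlinearity of $K$ requires embeddability of $L_{\omega_\theta}[Q]$ into $R_{\omega}$ for almost every character $\theta$, whereas hyperlinearity of $Q$ only gives the single untwisted algebra. Your device of letting the central generator $z$ act by a unitary scalar is inconsistent with the microstate condition: since $z$ is nontrivial in $K$, any microstate must give $\tr(\pi(z))$ close to $0$, while a unitary scalar has trace of modulus one. So $z$ must act as a unitary with many phases, and on each spectral sector of $\pi(z)$ your maps must be almost multiplicative for the correspondingly twisted product --- i.e.\ you need twisted microstates for many characters $\theta$, which is precisely what you do not have. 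Worse, for $Q=Sp_{2n}(\Z)$ the natural supply of finite-dimensional approximations comes from congruence quotients, and Deligne's theorem says exactly that these kill the center (up to $2\Z$), so they cannot be corrected into nontrivially twisted approximations. Compare the paper's second proof of Proposition \ref{hyper}: its whole point is to manufacture twisted finite-dimensional representations for a dense set of characters of the center, which exists there for arithmetic reasons ($\Z\subset\Z[1/p]$ maps to $\Z/q\Z$ with prescribed approximate characters) and has no analogue for Deligne's group.
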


Unfortunatelly, we cannot decide whether our example is sofic or not. In any case, we can conclude that either there exists a hyperlinear group which is not sofic, or there exists a sofic group which is not initially subamenable. Note that this group cannot have the factorization property.

\begin{remark} \label{disprove}
E.\ Kirchberg claims in \cite[Corollary 1.2]{MR1282231} that the three properties in Theorem \ref{kirchberg}
are equivalent to $G$ being isomorphic to a subgroup of an almost connected locally compact group. However, our second example $K$ of a group without factorization property arises as a subgroup of a connected finite dimensional Lie group and disproves this claim. The mistakes in the proof of Corollary $1.2$ of \cite{MR1282231} arises in the reference to the proof of Lemma $7.3$(iii) in \cite{MR1218321}. There, it is implicitly assumed that $G$ is unimodular and has a basis of almost conjugation invariant compact neighborhoods, see Remark \ref{kir2} for more details.
\end{remark}

I want to thank Alain Valette for pointing out a reference and contributing many helpful comments after reading the first draft of this paper.

\section{The first example}

\subsection{Non-hopfian Kazhdan groups}

We follow ideas of Y.\ de Cornulier \cite{MR2262894} in constructing non-hopfian groups with Kazhdan's property (T). Some of his ideas go back to work of H.\ Abels \cite{MR564423}. 

Let $R$ be a (unital) commutative ring and define $G_0(R)$ to 
be the following group of matrices:

$$G_0(R) = \left\{ \left. \left[ \begin{array}{ccccc} 
1 &a_{12}&a_{13} &a_{14}&a_{15} \\
0 &a_{22}&a_{23} &a_{24}&a_{25} \\
0 &a_{32}&a_{33} &a_{34}&a_{35} \\
0 &a_{42}&a_{43} &a_{44}&a_{45} \\
0 &0&0 &0&1
\end{array}  \right] \in SL_5(R) \right| a_{i,j} \in R \right\}$$

We will be mainly interested in $G' = G_0(\F_p[t,t^{-1}])$ for some prime $p$.
The centre $Z(G')$ of $G'$ consists of the elementary matrices $\{ e_{15}(a) \mid a \in \F_p[t,t^{-1}]\}$ and it is naturally isomorphic to the group
\begin{equation} \label{isom}
Z(G') \cong \oplus_{n \in \Z} \F_p t^n.
\end{equation} 
We denote by $C$ the subgroup
corresponding to $\oplus_{n \geq 0} \F_pt^n \subset Z(G')$ and set 
$G =G'/C$. Conjugation with the diagonal matrix $t \oplus 1 \oplus \cdots \oplus 1$
induces an automorphism of $G'$ and of its centre which shifts the index set under the isomorphism in Equation \ref{isom} and hence maps $C$ onto a proper subgroup of itself. As a consequence, the induced endomorphism on $G$ is surjective (since it is induced by a surjection) and not injective. Indeed, the kernel is isomorphic to $(\F_p,+)$ and sits as $\F_pt^{-1}$ in the upper right corner of $G$. We conclude that $G$ is non-hopfian.

\begin{lemma} \label{lem}
The group $G$ has Kazhdan's property (T). \end{lemma}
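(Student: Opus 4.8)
The plan is to derive property (T) for $G$ from property (T) for $G'$, and to obtain the latter by splitting off the centre. Since property (T) passes to quotients and $G=G'/C$, it suffices to prove that $G'=G_0(\F_p[t,t^{-1}])$ has property (T). Write $R=\F_p[t,t^{-1}]$ and identify the central $3\times 3$ block with $SL_3(R)$. Conjugation by this block turns the row $U^+=\{e_{12}(a),e_{13}(a),e_{14}(a)\}\cong R^3$ into the contragredient standard module and the column $U^-=\{e_{25}(a),e_{35}(a),e_{45}(a)\}\cong R^3$ into the standard module, while fixing the corner $Z=\{e_{15}(a)\}\cong R$. The commutator map $U^+\times U^-\to Z$ is the $SL_3(R)$-invariant pairing $(w,v)\mapsto w\cdot v$, so the unipotent radical $N:=\langle U^+,U^-\rangle$ is a two-step nilpotent normal subgroup with $[N,N]=Z$ and $N/Z\cong U^+\oplus U^-$, and $G'/Z\cong SL_3(R)\ltimes R^6$.

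I would first dispatch the two easy ingredients, using throughout the standard principle that if $M\trianglelefteq\Gamma$ with $(\Gamma,M)$ relatively (T) and $\Gamma/M$ of type (T), then $\Gamma$ is (T) (given a cocycle, relative (T) produces an $M$-fixed point for the associated affine action, and (T) of $\Gamma/M$ produces a $\Gamma$-fixed point on the affine subspace of $M$-fixed points). First, $SL_3(R)$ has property (T): the ring $R$ is the ring of $S$-integers of $\F_p(t)$ for $S=\{0,\infty\}$, so $SL_3(R)$ is an irreducible lattice in $SL_3(\F_p((t)))\times SL_3(\F_p((t^{-1})))$, a product of higher-rank simple groups over local fields, each of which has property (T); hence the product and the lattice do as well. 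Second, the pair $(SL_3(R)\ltimes R^6, R^6)$, for the standard and contragredient actions on $R^6=U^+\oplus U^-$, has relative property (T): by the spectral-gap criterion for semidirect products with abelian kernel it suffices that the standard module $R^3$ carry no almost-invariant characters near the trivial one, which one checks after embedding $R$ into the local fields $\F_p((t))$ and $\F_p((t^{-1}))$, over each of which $SL_3(k)$ acts on the dual of $k^3$ with no invariant mean concentrated at $0$. Combining these two via the extension principle shows that $G'/Z\cong SL_3(R)\ltimes R^6$ has property (T).

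It then remains to show that the pair $(G',Z)$ has relative property (T); granting this, one last application of the extension principle to the central, hence normal, subgroup $Z$ and the (T) quotient $G'/Z$ gives property (T) for $G'$, and the lemma follows. This central step is the main obstacle, and is precisely the point de Cornulier isolates for Abels-type groups: because $SL_3(R)$ acts trivially on $Z$, relative (T) of $(G',Z)$ cannot be read off from the abelianization $N/Z$, and any naive reduction to it fails. Instead I would analyse the unitary representations of $N$ with a fixed non-trivial central character $\chi$; by Stone--von Neumann these are the oscillator-type representations attached to the $\chi$-twisted pairing $U^+\times U^-\to Z$, on which $SL_3(R)$ acts through its symplectic action on the polarization $U^+\oplus U^-$. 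The crux is to produce a spectral gap for this family that does not degenerate as $\chi\to 1$: here the non-degeneracy of the invariant pairing together with property (T) of $SL_3$ over $\F_p((t))$ and $\F_p((t^{-1}))$ should furnish a uniform gap, so that no sequence of such non-trivial representations can converge to the trivial representation. This isolates the trivial character of $Z$, which is exactly relative property (T) of $(G',Z)$.
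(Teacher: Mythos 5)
Your global strategy (pass to $G'$, prove property (T) for $G'/Z \cong SL_3(R)\ltimes R^6$ with $R=\F_p[t,t^{-1}]$, then combine with relative property (T) for $(G',Z)$ via the extension principle) is coherent, and the two ingredients you actually supply are fine: property (T) for $SL_3(R)$ (your lattice argument is the classical alternative that the paper itself records in the remark following the lemma; the paper instead quotes Suslin's theorem $EL_3(R)=SL_3(R)$ together with Shalom's algebraic criterion) and relative property (T) for $(SL_3(R)\ltimes R^6,R^6)$. The genuine gap is the step you yourself call the main obstacle: relative property (T) for $(G',Z)$. You never prove it --- your argument ends with the assertion that non-degeneracy of the pairing and property (T) of $SL_3$ over the local fields ``should furnish a uniform gap''. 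Since every other ingredient is a quotable known result, this is exactly where the entire difficulty of the lemma is concentrated, so the proof is incomplete. Worse, the tool you propose is not available: $N$ is a \emph{discrete} two-step nilpotent group, it is not of type I, and the Stone--von Neumann theorem does not classify its unitary representations with a fixed non-trivial central character; already the discrete Heisenberg group $H_3(\Z)$ with irrational central character admits a wealth of inequivalent representations beyond oscillator-type ones. So the proposed spectral analysis cannot even be set up as stated, and the uniformity as $\chi\to 1$ would remain exactly as hard as the statement you are trying to prove.

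The paper avoids relative property (T) for the centre altogether, by the very reduction you dismiss when you write that ``any naive reduction to the abelianization fails''. In $G_0(R)$ one has the commutator identity $e_{15}(ab)=[e_{12}(a),e_{25}(b)]$, so every element of the corner subgroup is a \emph{single} commutator of an element of the row copy of $R^3$ with an element of the column copy of $R^3$. Hence, once a $1$-cocycle $c$ is bounded on the copy of $SL_3(R)$ (which has (T)) and on the two copies of $R^3$ (from Shalom's relative property (T) for the pair $(SL_2(R)\ltimes R^2,R^2)$), the cocycle identity bounds $c$ on the corner as well, and then on all of $G$, because these three subgroups generate $G$ boundedly: every element of $G$ is a product of a uniformly bounded number of elements from them. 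That is the whole proof. If you want to keep your extension-principle framework, the same commutator observation proves relative property (T) for $(G',Z)$ directly (a cocycle bounded on the row and column copies is bounded on $Z$, since $Z$ consists of single commutators of such elements); this is de Cornulier's elementary trick, and it repairs your argument with no representation-theoretic input at all.
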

\begin{proof}
First of all, A.\ Suslin proved in \cite{MR0472792} that the group of elementary $3 \times 3$-matrices $EL_{3}(\F_p[t,t^{-1}])$ coincides with $SL_3(\F_p[t,t^{-1}]$ (see for example Proposition $5.4$ in \cite{MR2235330}). Secondly, Y.\ Shalom showed in \cite[Thm.\ 1.1]{MR2275645} that for any finitely generated and commutative ring $R$, the group $EL_n(R)$ is a Kazhdan group for 
$n \geq 2 + \dim\, R$, where \emph{dim} denotes the Krull dimension of the ring $R$.

Hence, since $\dim\, \F_p[t,t^{-1}] =1$, we get that $SL_3(\F_p[t,t^{-1}])$ is a Kazhdan group. Moreover, by \cite[Thm.\ 2.4]{MR2275645}, the pair $$(SL_2(\F_p[t,t^{-1}]) \ltimes \F_p[t,t^{-1}]^{\times 2},\F_p[t,t^{-1}]^{\times 2})$$ has the relative Kazhdan property; meaning that every 1-cocycle on the crossed product will be bounded on $\F_p[t,t^{-1}]^{\times 2}$.

Let now $\pi\colon G \to U(H)$ be a unitary representation of $G$ and $c\colon G \to H$ a $1$-cocycle. In order to show that $G$ has Kazhdan's property (T) it is sufficient to show that $c$ is bounded. Using the results above we see that $c$ is bounded on the copy of $SL_3(\F_p[t,t^{-1}])$ and bounded on the two copies of $\F_p[t,t^{-1}]^{\times 3}$. 
Since these subgroups generate $G$ boundedly, it follows that $c$ is bounded on $G$. This finishes the proof.
\end{proof}

\begin{remark}
It was pointed out to me by A.\ Valette, that there is a proof of Lemma \ref{lem} using more classical technology. Indeed, the Borel-Harish Chandra criterion applies to show that $G_0(\F_p[t,t^{-1}])$ is a $S$-arithmetic lattice in the locally compact group $\tilde G=G_0(\F_p[[t]]) \times G_0(\F_p[[t^{-1}]])$. Here, $\F_p[[t]]$ and $\F_p[[t^{-1}]]$ denote the Laurent power series rings in the variable $t$ and $t^{-1}$ respectively. Standard arguments along the lines of our proof now show that the locally compact group $\tilde G$ has property (T). This implies that also $G_0(\F_p[t,t^{-1}])$ (and hence $G$ being a quotient of $G_0(\F_p[t,t^{-1}])$) has property (T).
\end{remark}

The first consequence - which is not entirely obvious - is that $G$ is finitely generated. Since $G$ is not hopfian, we can now invoke Mal'cev's theorem (see Theorem \ref{malcev}) and get that $G$ is not residually finite. Hence, using Kirchberg's result (see Theorem \ref{kirchberg}), $G$ necessarily fails to have the factorization property. In order to retrieve the desired example, we show that the group $G$ is hyperlinear.

In fact, we will show that $G$ is \emph{locally embeddable into finite groups} (LEF), i.e. every finite piece of the multiplication table of $G$ can be found as a piece of the multiplication table of a finite group. This notion was studied by A.\ Vershik and E.\ Gordon in \cite{MR1458419}. Considering the left regular representation of the finite group, one easily sees that such groups are sofic (see \cite{MR2178069} and Definition \ref{sofic-group}) and hyperlinear, even with $\varepsilon=0$.

\begin{lemma}
For every finite subset $F \subset G$, there exists a finite group $K$ and an injective map 
$\phi \colon F^2 \to K$, such that
$$\phi(f_1f_2) = \phi(f_1)\phi(f_2),\quad \forall f_1,f_2 \in F.$$
\end{lemma}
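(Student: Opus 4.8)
The plan is to realize a finite window of the multiplication table of $G$ inside a finite quotient of $G_0(S)$ for a suitably chosen finite ring $S$. First I would lift the data: choose for each element of $F$ and for each product $f_1f_2$ (with $f_1,f_2\in F$) a representing matrix in $G'=G_0(\F_p[t,t^{-1}])$, and let $D$ be a common bound for the degrees of the Laurent polynomials occurring in all of these matrices, together with their inverses and the products of any two of them. Writing an element of $G_0$ in block form with blocks of sizes $1,3,1$ as $\bigl(\begin{smallmatrix} 1 & u & z \\ 0 & A & v \\ 0 & 0 & 1\end{smallmatrix}\bigr)$, the product rule gives central entry $z+z'+uv'$, so that in $G=G'/C$ only the negative-degree part of $z+z'+uv'$ survives. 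The point to keep in mind is that one cannot separate the elements of $F^2$ by a genuine homomorphism into a finite group — that would contradict the failure of residual finiteness — because $C=\{e_{15}(a)\mid a\in\F_p[t]\}$ contains the constant central elements $e_{15}(c)$, which no ring homomorphism can annihilate. The map $\phi$ will therefore not be a homomorphism.

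For the finite model I would fix $N$ much larger than $D$, reduce modulo $t^N-1$, i.e. pass to the finite ring $S=\F_p[t,t^{-1}]/(t^N-1)$ via the reduction $r_N\colon\F_p[t,t^{-1}]\to S$ and the induced group homomorphism $\rho_N\colon G'\to G_0(S)$. Inside the central subgroup $\{e_{15}(s)\mid s\in S\}$ of $G_0(S)$ I consider $e_{15}(W)$, where $W\subset S$ is the $\F_p$-span of $t^0,t^1,\dots,t^{D}$; this is a central, hence normal, finite subgroup, and I set $K=G_0(S)/e_{15}(W)$. Finally I define $\phi\colon F^2\to K$ by $\phi(x)=\rho_N(\hat x)\cdot e_{15}(W)$, where $\hat x\in G'$ is the chosen bounded lift of $x$.

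For multiplicativity on $F$ I would note that for $f_1,f_2\in F$ the two matrices $\hat f_1\hat f_2$ and $\widehat{f_1f_2}$ both represent $f_1f_2\in G$, so they differ by an element of $C$; by the product rule this element is $e_{15}(a)$ with $a\in\F_p[t]$ of degree $\le D$, whence $\rho_N(e_{15}(a))=e_{15}(r_N(a))\in e_{15}(W)$ and the two sides agree in $K$. For injectivity I would take $N\ge 2D+1$, so that $r_N$ is injective on Laurent polynomials supported in $[-D,D]$ and the residue windows $\{0,\dots,D\}$ and $\{N-D,\dots,N-1\}$ are disjoint. Then $\phi(x)=\phi(y)$ forces $\rho_N(\hat x^{-1}\hat y)\in e_{15}(W)$; since $\hat x^{-1}\hat y$ has entries supported in $[-D,D]$ and $r_N$ is injective there, this matrix must itself equal $e_{15}(w)$ with $w$ supported in $\{0,\dots,D\}$, i.e. $w\in\F_p[t]$, so that $\hat x^{-1}\hat y\in C$ and $x=y$ in $G$.

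The main obstacle is precisely the passage to the quotient $G=G'/C$: a naive reduction of the ring sends all of $C$ — including the constant central elements — into the center of $G_0(S)$ and destroys exactly the central information that distinguishes elements of $F^2$, besides not being defined on $G$ at all. The device that resolves this is a combination of two operations that only cooperate on a bounded window: reducing modulo $t^N-1$ folds the negative powers $t^{-1},\dots,t^{-D}$ onto the high residues $t^{N-1},\dots,t^{N-D}$, keeping them disjoint from the nonnegative window $t^0,\dots,t^{D}$ that one is allowed to forget, and dividing the finite target by $e_{15}(W)$ kills just that forgettable window while preserving the surviving central data. Checking that $D$ can be chosen to bound the supports of all inverses and products in play, and that $N$ can then be taken large enough for both the injectivity of $r_N$ on $[-D,D]$ and the disjointness of the two windows, is routine once the degrees are tracked.
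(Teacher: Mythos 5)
Your proof is correct and is essentially the paper's own argument: the paper reduces modulo $t^{6n}-1$ (introducing a formal $6n$-th root of unity $\xi$) and quotients $G_0(\F_p[\xi])$ by the central window $C'=\oplus_{k=0}^{3n-1}\F_p\xi^k$, which is exactly your construction with $(N,W)$ replaced by $(6n,\{\xi^0,\dots,\xi^{3n-1}\})$. Your write-up merely makes explicit the degree bookkeeping, the folding of negative powers onto high residues, and the injectivity check that the paper dismisses as ``clear''.
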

\begin{proof}
Since $F$ is finite, there exists some integer $n \geq 1$, such that $t^k$ and $t^{-k}$ for $k \geq n$ are not involved in writing down the elements from $F$.

Consider $G_0(\F_p[\xi])$ where $\xi$ is a formal $6n$-th root of $1$ and consider
$K=G_0(\F_p[\xi])/C'$ where $C' = \oplus_{k=0}^{3n-1} \F_p \xi^k$. Clearly, since $\F_p[\xi]$ is a finite ring, the group $K$ is finite. It is also clear that the the natural identification of $F^2$ with a subset of $K$ which maps $t^k \mapsto \xi^k$ satisfies the required multiplicativity. This finishes the proof.
\end{proof}

\subsection{Some consequences for the theory of operator spaces}

Following ideas of M.\ Gromov, R.\ Grigorchuk (see \cite{MR764305}) has introduced the space of marked groups with $n$ generators. The elements in that space are (equivalence classes of) finitely generated groups $G$ with a fixed (ordered) generating set $S$ of cardinality $n$. Again, fixing an ordered generating set amounts to fixing a surjection $\phi\colon F_n \to G$. The distance is defined in terms of the kernel $N \subset F_n$ of $\phi$ as follows:
$$d((G,S),(G',S')) = 
\inf \{2^{-k} \mid k \in \N \colon N \cap B_{F_n}(k)  = N' \cap B_{F_n}(k)\},$$

where $B_{F_n}(k)$ denotes the ball of radius $k$ in $F_n$.

\vspace{0.1cm}

Let $(G,S)$ be a marked group. We denote by $C^* G|_{\leq k} \subset C^*G$ the operator system spanned by all group elements of length $\leq k$ with respect to the length induced by $S$. We consider a convergent sequence of marked groups $$ \lim_{i \to \infty} (G_i,S_i) = (G,S).$$
Denote by $p_{i,k}\colon C^*G|_{\leq k} \to C^*G_i|_{\leq k}$ the natural linear bijections (defined for large $i$) of operator systems induced by the identification of subsets of $G$ and $G_i$. It is natural to expect that convergence of marked groups is somehow resembled in the convergence of the associated operator systems. 
More precisely, one would like to decide whether the cb-norm of the maps $p_{i,k}\colon C^*G|_{\leq k} \to C^*G_i|_{\leq k}$
necessarily tends to one.
However, we show that this is not the case in general. We first have to recall a well-known theorem in the theory of operator spaces and conclude some corollary:

\begin{theorem}[Haagerup-Paulsen]
Let $\phi\colon A \to B(K)$ be a completely contractive map. Then there exists a Hilbert space $H$, a $*$-homomorphism $\pi\colon A \to B(H)$, 
and isometries $T,S \colon K \to H$, such that 
$$\phi(a)= S^*\pi(a) T.$$
\end{theorem}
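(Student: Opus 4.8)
The plan is to derive the factorization from Stinespring's dilation theorem by way of Paulsen's off-diagonal trick, which turns a completely contractive map into a unital completely positive one on an auxiliary operator system. I take $A$ to be a unital $C^*$-algebra (unitizing if necessary), which is the essential case for the conclusion that $\pi$ is a $*$-homomorphism.

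First I would introduce the operator system
$$\mathcal S = \left\{ \begin{pmatrix} \lambda 1 & a \\ b^* & \mu 1 \end{pmatrix} : \lambda,\mu \in \C,\ a,b \in A \right\} \subset M_2(A)$$
together with the map
$$\tilde\phi\left( \begin{pmatrix} \lambda 1 & a \\ b^* & \mu 1 \end{pmatrix} \right) = \begin{pmatrix} \lambda 1_K & \phi(a) \\ \phi(b)^* & \mu 1_K \end{pmatrix}.$$
Pairing $a$ against $b^*$ makes $\tilde\phi$ self-adjoint with no hypothesis on $\phi$, and $\tilde\phi$ is visibly unital. The decisive step is the equivalence that $\phi$ is completely contractive if and only if $\tilde\phi$ is completely positive. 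This rests on the elementary fact that for $x$ in a $C^*$-algebra one has $\|x\|\le 1$ precisely when $\begin{pmatrix} 1 & x \\ x^* & 1\end{pmatrix}\ge 0$, applied at every matrix level: the implication from positivity of $\tilde\phi$ is then immediate upon amplifying, while the converse uses the structure of positive $2\times 2$ operator matrices together with a limiting argument to absorb non-invertible diagonal corners. I expect this equivalence to be the main obstacle, since it is exactly where the completeness of the hypotheses is genuinely used and where the block-positivity bookkeeping must be handled with care.

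With the equivalence in hand the rest is assembly. By Arveson's extension theorem I extend $\tilde\phi$ to a unital completely positive map $\Psi\colon M_2(A)\to B(K\oplus K)$, and Stinespring then produces a Hilbert space $\hat H$, a unital $*$-representation $\rho\colon M_2(A)\to B(\hat H)$, and an isometry $W\colon K\oplus K\to\hat H$ with $\Psi = W^*\rho(\cdot)W$. Writing $P_{ij}=\rho(e_{ij})$ for the images of the matrix units, $P:=P_{11}$ is a projection; I put $H=P\hat H$ and define $\pi\colon A\to B(H)$ by $\pi(a)=\rho\left(\begin{smallmatrix} a & 0 \\ 0 & 0\end{smallmatrix}\right)|_H$, which is a $*$-homomorphism into $B(H)$.

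Finally I would extract the isometries. Let $W_1,W_2\colon K\to\hat H$ be the restrictions of $W$ to the two summands. Since $\Psi(e_{ii})$ equals the orthogonal projection onto the $i$-th copy of $K$, a short computation with the isometry $W$ forces $\mathrm{range}(W_i)\subseteq P_{ii}\hat H$. Setting $S:=W_1$ and $T:=P_{12}W_2$, the matrix-unit relations give $T^*T = W_2^*P_{22}W_2 = 1_K$, so that $S,T\colon K\to H$ are isometries; reading off the $(1,2)$-block of $\Psi\left(\begin{smallmatrix} 0 & a \\ 0 & 0\end{smallmatrix}\right)$ yields $\phi(a)=W_1^*\rho\left(\begin{smallmatrix} 0 & a \\ 0 & 0\end{smallmatrix}\right)W_2 = S^*\pi(a)T$, which is the desired factorization.
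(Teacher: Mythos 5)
The paper does not prove this theorem: it is recalled as a well-known result of operator space theory (the Haagerup--Paulsen--Wittstock factorization theorem, for which the paper's implicit reference is the operator-space literature such as Pisier's book), and its only role is to deduce Corollary \ref{coro}. So there is no proof of the paper's own to compare yours against; the question is simply whether your argument is sound, and it is --- indeed it is the standard proof. The architecture (Paulsen's off-diagonal technique, Arveson's extension theorem, Stinespring dilation, matrix-unit bookkeeping) is exactly the route taken in the literature, and your final assembly is carried out correctly: since $\Psi(e_{ii})$ is the projection onto the $i$-th summand of $K\oplus K$, the equality $\|P_{ii}W_i\xi\|=\|W_i\xi\|$ forces $P_{ii}W_i=W_i$; the map $T=P_{12}W_2$ is an isometry into $H=P_{11}\hat H$ because $P_{12}^*P_{12}=P_{22}$ and $P_{11}P_{12}=P_{12}$; and reading off the $(1,2)$-corner of $\Psi\left(\begin{smallmatrix}0&a\\0&0\end{smallmatrix}\right)$ via the identity $\left(\begin{smallmatrix}0&a\\0&0\end{smallmatrix}\right)=\left(\begin{smallmatrix}a&0\\0&0\end{smallmatrix}\right)e_{12}$ yields $\phi(a)=S^*\pi(a)T$. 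The one caveat is the step you yourself flag as the crux: that complete contractivity of $\phi$ implies complete positivity of $\tilde\phi$ is asserted with a sketch rather than proved. Your sketch does point at the correct argument --- at level $n$, after the canonical shuffle, a positive element of $M_n(\mathcal S)$ has positive \emph{scalar} diagonal blocks $\Lambda,M$ and adjoint-related corners; for invertible $\Lambda,M$ positivity is equivalent to $\|\Lambda^{-1/2}AM^{-1/2}\|\le 1$, scalar matrices pass through $\phi^{(n)}$, and singular $\Lambda,M$ are absorbed by an $\varepsilon$-perturbation and a limit --- but in a self-contained write-up this lemma of Paulsen carries essentially all the weight of the theorem and would have to be proved in full. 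Your reduction to unital $A$ is harmless for the paper's purposes, since the theorem is applied there to $A=C^*G$, which is unital.
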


It is well-known that every unital complete contraction is completely positive. The following consequence of the preceding theorem is an approximate version of this result, which is surely well-known to experts.

\begin{corollary} \label{coro}
Let $0 \leq \varepsilon < 1$.
Let $A$ be a unital $C^*$-algebra and let $\phi\colon A \to B(H)$ be a self-adjoint, unital, and completely bounded map with $\|\phi\|_{cb} \leq 1 + \varepsilon$. There exists a unital completely positive map $\psi\colon A \to B(H)$, such that
$$\|\phi - \psi\|_{cb} \leq 3 \sqrt{\varepsilon}.$$
\end{corollary}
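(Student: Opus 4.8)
The plan is to rescale $\phi$ so that the Haagerup--Paulsen factorization applies, and then to replace the two isometries in that factorization by a single one, turning the factorization into a compression of a $*$-homomorphism, which is automatically completely positive. First I would pass to $\phi_0 := (1+\varepsilon)^{-1}\phi$, which is completely contractive because $\|\phi\|_{cb}\le 1+\varepsilon$. Applying the Haagerup--Paulsen theorem to $\phi_0$ yields a Hilbert space $\hat H$, a unital $*$-homomorphism $\pi\colon A \to B(\hat H)$, and isometries $S,T\colon H \to \hat H$ with $\phi_0(a)=S^*\pi(a)T$, hence
$$\phi(a) = (1+\varepsilon)\,S^*\pi(a)\,T, \qquad a \in A.$$

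The candidate for the approximant is $\psi(a) := S^*\pi(a)S$. It is unital because $S$ is an isometry and $\pi$ is unital ($\psi(1)=S^*S=I_H$), and it is completely positive and self-adjoint because it is a compression $V^*\pi(\cdot)V$ of a $*$-homomorphism. Thus $\psi$ already has all the structural properties demanded, and the entire task reduces to the completely bounded estimate $\|\phi-\psi\|_{cb}\le 3\sqrt\varepsilon$.

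For that estimate the crucial input is unitality of $\phi$. Comparing $\phi(1)=I_H$ with $\pi(1)=I_{\hat H}$ gives $(1+\varepsilon)S^*T=I_H$, i.e. $S^*T=(1+\varepsilon)^{-1}I_H$. Since $S$ and $T$ are isometries this fixes the angle between them: for a unit vector $\xi$,
$$\|(S-T)\xi\|^2 = 2 - 2\,\mathrm{Re}\,\langle \xi, S^*T\,\xi\rangle = \frac{2\varepsilon}{1+\varepsilon}\le 2\varepsilon,$$
so $\|S-T\|\le \sqrt{2\varepsilon}$. Writing $\phi(a)-\psi(a) = S^*\pi(a)\big((1+\varepsilon)T-S\big)$ and using that ampliations of $\pi$ are contractive while $S$ is an isometry, the factorization propagates to matrix levels and yields
$$\|\phi-\psi\|_{cb}\le \|(1+\varepsilon)T-S\| \le \varepsilon\|T\|+\|T-S\| \le \varepsilon+\sqrt{2\varepsilon}.$$
Since $0\le\varepsilon<1$ forces $\varepsilon\le\sqrt\varepsilon$, this gives $\varepsilon+\sqrt{2\varepsilon}\le(1+\sqrt2)\sqrt\varepsilon\le 3\sqrt\varepsilon$, as required.

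I expect the only genuinely delicate point to be the bookkeeping inside the Haagerup--Paulsen step: one must be able to take $\pi$ unital while keeping $S$ and $T$ honest isometries (compressing to the essential subspace $\pi(1)\hat H$ if needed), since the unitality of $\psi$ leans on both $\pi(1)=I$ and $S^*S=I$. Everything after that is the elementary two-isometry computation above; in particular the self-adjointness hypothesis on $\phi$ is not really used here, as complete positivity of the compression $\psi$ already renders it self-adjoint.
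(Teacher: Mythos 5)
Your proposal is correct and follows essentially the same route as the paper: rescale to $(1+\varepsilon)^{-1}\phi$, apply Haagerup--Paulsen, take $\psi(a)=S^*\pi(a)S$, and use $S^*T=(1+\varepsilon)^{-1}$ to get $\|T-S\|\le\sqrt{2\varepsilon}$, ending with the same $\varepsilon+\sqrt{2\varepsilon}\le 3\sqrt{\varepsilon}$ estimate. Your closing remarks (that $\pi$ can be arranged unital, and that the self-adjointness hypothesis is not actually needed) are sound observations that the paper leaves implicit.
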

\begin{proof}
Clearly, $\phi_{\varepsilon}(a) = (1 + \varepsilon)^{-1} \phi(a)$ is completely contractive
and hence, there exist isometries $T,S \colon K \to H$ and a unital $*$-homomorphism
$\pi\colon A \to B(H)$, such that
$$\phi_{\varepsilon}(a) = S^*\pi(a)T.$$

Since $S^*T = T^*S = \phi_{\varepsilon}(1) = (1+\varepsilon)^{-1}$, we compute:
$$(T^*-S^*)(T-S) = T^*T -S^*T - T^*S + S^*S = 2 - 2(1+\varepsilon)^{-1} \leq 2 \varepsilon.$$
We conclude that $\|T-S\| \leq \sqrt{2\varepsilon}$.
We can now simply set $\psi(a) = S^*\pi(a)S$, and compute
\begin{eqnarray*}
\|\psi(a) - \phi(a)\| &\leq& \|\psi(a) - \phi_{\varepsilon}(a)\| + \|\phi_{\varepsilon}(a) - \phi(a)\| \\
&\leq& \|S^*\pi(a)(T-S)\|+ (1-(1+\varepsilon)^{-1}))  \|\phi(a)\| \\
&\leq& (\sqrt{2 \varepsilon} + \varepsilon) \cdot \|a\| \\
&\leq& 3 \sqrt{\varepsilon} \cdot \|a\| 
\end{eqnarray*}
for $a \in A$. A similar computation for $a \in M_n\C \otimes_{\C} A$ yields that
$\|\psi -\phi\|_{cb} \leq 3 \sqrt{\varepsilon}$. This proves the claim, since $\psi(a)$ is unital completely positive.
\end{proof}

\begin{corollary}
There exists a convergent sequence of marked groups $G_i \to G$ and an integer $k \geq 1$, such that 
$$\liminf_{i \to \infty} \left\|p_{i,k}\colon C^* G|_{\leq k} \to C^* G_i|_{\leq k}\right\|_{cb} >1.$$
\end{corollary}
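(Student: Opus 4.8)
The plan is to argue by contradiction, exploiting the tension already established in this section: $G$ is a Kazhdan group that fails the factorization property (by Theorem \ref{kirchberg} together with Theorem \ref{malcev}, since $G$ is non-hopfian and hence not residually finite), whereas its \emph{finite} approximations automatically possess it. First I would fix the convergent sequence of marked groups coming from the LEF property: applying the previous lemma to the balls $F_k \subset G$ of radius $k$ produces finite groups together with partial multiplicative identifications, and, passing to the subgroups generated by the images of $S$, I obtain marked finite groups $G_i$ with $(G_i,S_i) \to (G,S)$. The decisive point is that each $G_i$, being finite, has the factorization property in the strongest possible way: its left regular representation $\lambda_i \colon C^*G_i \to M_{n_i}\C$ is a unital $*$-homomorphism, hence completely contractive, and satisfies $\tr \circ \lambda_i = \tau_i$ exactly, where $\tau_i$ denotes the canonical trace of $C^*G_i$.

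Next I would assume, for contradiction, that $\liminf_i \|p_{i,k}\|_{cb} = 1$ for every $k$ (note $\|p_{i,k}\|_{cb}\geq 1$, as $p_{i,k}$ is unital). For each $k$ I choose $i=i(k)$ large enough that the kernels of $F_n \to G$ and $F_n \to G_i$ agree on the ball of radius $k$ and that $\varepsilon_k := \|p_{i,k}\|_{cb} - 1$ is as small as desired. The composite $\lambda_i\circ p_{i,k}\colon C^*G|_{\leq k} \to M_{n_i}\C$ is then self-adjoint (both factors preserve adjoints, since $g^*=g^{-1}$ has the same length), unital, and completely bounded with cb-norm $\leq 1+\varepsilon_k$. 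By the standard extension theorem for completely bounded maps into an injective target (see \cite{MR2006539}) it extends, without increasing the cb-norm, to a self-adjoint unital map $\Phi_k\colon C^*G \to M_{n_i}\C$, to which I would apply Corollary \ref{coro} in order to produce a unital completely positive map $\psi_k\colon C^*G \to M_{n_i}\C$ with $\|\Phi_k-\psi_k\|_{cb}\leq 3\sqrt{\varepsilon_k}$.

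It then remains to check that the sequence $(\psi_k)_k$ witnesses the factorization property of $G$. On a group element $g$ of length $\leq k$ one has $\psi_k(g)\approx \lambda_i(g_i)$ up to $3\sqrt{\varepsilon_k}$ in operator norm, where $g_i\in G_i$ is the image of $g$; hence $\tr(\psi_k(g)) \to \tau_i(g_i)=\tau(g)$, because marked-group convergence forces $g_i=e$ precisely when $g=e$. Likewise, for $g,h$ with $|g|,|h|,|gh|\leq k$ the identifications give $\lambda_i(g_i)\lambda_i(h_i)=\lambda_i((gh)_i)$, so $\|\psi_k(gh)-\psi_k(g)\psi_k(h)\|$ is controlled by $\sqrt{\varepsilon_k}$ and in particular tends to zero in $\|\cdot\|_2$. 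Since the group elements span a dense subalgebra of $C^*G$, a diagonal argument over $k$ promotes these estimates to conditions (i) and (ii) for all $a,b\in C^*G$; thus $G$ would have the factorization property, contradicting the above. Therefore $\liminf_i \|p_{i,k}\|_{cb}>1$ for some $k$, which is the assertion. I expect the main obstacle to be precisely the bookkeeping in this final step: the maps $p_{i,k}$ are defined only on the length-$\leq k$ operator system, so one must simultaneously control the two independent error sources — the cb-norm excess $\varepsilon_k$ and the radius up to which the multiplication tables of $G$ and $G_i$ agree — and arrange the diagonalization so that both survive the passage to the completion.
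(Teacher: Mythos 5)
Your proposal follows the paper's own argument essentially verbatim: both proceed by contradiction from the LEF-derived sequence of finite marked groups converging to $G$, extend the maps $p_{i,k}$ (composed with the inclusion into $B(\ell^2 G_i)$, i.e.\ the left regular representation) via Wittstock's extension theorem without increasing the cb-norm, perturb to unital completely positive maps using Corollary \ref{coro}, and conclude that $G$ would have the factorization property, contradicting what was established via Kirchberg's and Mal'cev's theorems. The one detail you elide --- that the Wittstock extension can be arranged to be self-adjoint so that Corollary \ref{coro} applies, e.g.\ by replacing $\Phi_k$ with $\frac{1}{2}(\Phi_k + \Phi_k^{\sharp})$ where $\Phi_k^{\sharp}(a)=\Phi_k(a^*)^*$ --- is a standard symmetrization that the paper glosses over as well.
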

\begin{proof} We provided an example of a sequence of finite groups that converges in the space of marked groups to a group without the factorization property. If for every $k$, the cb-norm of the natural maps $p_{i,k} \colon C^*G|_{\leq k} \to C^*G_i|_{\leq k}$ would tend to one on a subsequence, then by Wittstock's extension theorem for completely bounded maps, there would exist linear maps $p'_{i} \colon C^*G \to B(\ell^2 G_i)$ (extending $C^*G|_{\leq k} \to C^*G_i|_{\leq k} \subset C^*G_i \subset B(\ell^2 G_i)$ with the same cb-norms. By Corollary \ref{coro}, a slight cb-norm perturbation gives unital completely positive maps close to that extensions. Since $p'_i$ 
is compatible with traces on group elements of length $\leq k$, we could conclude that the group $G$ has the factorization property. However, as we showed, this is not the case. Hence, there exists some $k$, for which the assertion fails.
\end{proof}
\begin{remark}
Unfortunately, we were unable to say anything about
$$\liminf_{i \to \infty} \left\|p^{-1}_{i,k}\colon C^* G_i|_{\leq k} \to C^* G|_{\leq k}\right\|_{cb}.$$ However, in many interesting cases of convergence of marked groups, the convergence is induced by surjective group homomorphisms $\phi_i \colon G_i \to G$. In this case the above maps $p^{-1}_{i,k}$ are induced by $*$-homomorphisms $\phi'_i\colon C^*G_i \to C^*G$ and hence are completely contractive.
\end{remark}

\section{The second example} \label{secex}

\subsection{An example of de Cornulier}

In this section we provide an example of a finitely presented group, which is hyperlinear but not initially subamenable. Recall, a group is said to be \emph{initially subamenable} if every finite piece of its multiplication table can be found as a piece of the multiplication table of an amenable group. Let $R$ be a (unital) commutative ring.
Following de Cornulier (see \cite{MR2262894}), we set:
$$K_0(R)=  \left\{ \left[ \begin{array}{ccccc} 
1 &a_{12}&a_{13} &a_{14} \\
0 &a_{22}&a_{23} &a_{24} \\
0 &0&a_{33} &a_{34} \\
0 &0&0 &1 \\
\end{array}  \right] \in SL_8(R) \left| \begin{array}{l} a_{2,2}, a_{3,3} \in SL_3(R) \\
a_{12}, a_{13}, a_{24}^t, a_{34}^t \in M_{13}(R) \\
a_{23} \in M_{33}(R) \\
a_{14} \in R
\end{array} \right.
\right\}$$

We set $K_1(R) = K_0(R)/R$ (where we identify $R$ with the centre of $K_0(R)$) and denote the image of $g \in K_0(R)$ in $K_1(R)$ by $\overline g$. There is a natural lift $g'$ of $\overline g$ in $K_0(R)$ and we write $g_z$ for the element $g(g')^{-1}$.
Associated with the natural split above there is a $2$-cocycle
$\alpha \colon K_1(R) \times K_1(R) \to R$ which classifies the extension:
$$0 \to R \to K_0(R) \to K_1(R) \to 0.$$

We will be mainly interested in $K_0(\Z[\nicefrac1p])$. The example has similar properties as our first example above. The centre of $K_0(\Z[\nicefrac1p])$ is isomorphic to $\Z[\nicefrac1p]$ and we set $K=K_0(\Z[\nicefrac1p])/\Z$, where we view $\Z$ as the natural subgroup $\Z \subset \Z[\nicefrac1p]$. It was shown in \cite{MR2262894} that $K$ is a finitely presented non-hopfian Kazhdan group. (The main advantage over the first example is that $K$ is finitely presented.) In particular, the group $K$ is not initially subamenable. Indeed, any finitely presented initially subamenable group is residually amenable. Being a Kazhdan group, a homomorphic image in an amenable group follows to be finite. Hence, each finitely presented initially subamenable Kazhdan group is necessarily residually finite. However, $K$ is not residually finite since it is finitely generated and non-hopfian.
Note that $K$ does not have the factorization property. 

\begin{remark} \label{kir2}
Note that $K$ arises as a dense subgroup of the connected finite dimensional Lie group $K_0(\R)/\Z$. This disproves the implication (v)$\Rightarrow$(i) of Corollary $1.2$ in \cite{MR1282231}, see Remark \ref{disprove}. Kirchberg's proof still implies that $K$ cannot arise as a  subgroup of a unimodular almost connected locally compact group $L$ which has a local basis of compact sets $\{S_n\}_{n\in \N}$ such that
$$\lim_{n \to \infty} \frac{\mu(gS_n \triangle S_n g)}{\mu(S_n)} \to 0 \qquad \forall g \in L,$$
where $\mu$ denotes the Haar measure on $L$. 
Eberhard Kirchberg informed us that it is sufficient to assume (as an additional assumption) that the locally compact group $L$ satisfies property (Z), which was also introduced in \cite{MR1218321}. However, Kirchberg shows in \cite{MR1218321} that $L$ has {\it not} property (Z) if $SL_2 \R \subset L$.
\end{remark}


\subsection{The group $K$ is hyperlinear}

We will now prove that $K$ is hyperlinear. The ultra-product $R_{\omega}$ and the concept of a field of von Neumann algebras will play a role in the proof. However, we do not want to give its definition and properties (see \cite{MR2072092} for more details) since we will provide a second and more elementary proof without using the theory of von Neumann algebras. 

\begin{proposition} \label{hyper}
The group $K$ is hyperlinear.
\end{proposition}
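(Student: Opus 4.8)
The plan is to exploit the central extension structure of $K$ and to transport residual finiteness of the quotient across it by an averaging procedure. Write the defining central extension as $0 \to A \to K \to Q \to 0$, where $A = \Z[\nicefrac1p]/\Z \cong \Z(p^\infty)$ is the central Pr\"ufer subgroup and $Q = K_1(\Z[\nicefrac1p])$. First I would record that the full group $\tilde K := K_0(\Z[\nicefrac1p]) \le SL_8(\Z[\nicefrac1p])$ is a finitely generated linear group over the finitely generated domain $\Z[\nicefrac1p]$, hence residually finite by Mal'cev (its congruence quotients $K_0(\F_q)$, $q \neq p$, separate points), and likewise $Q$ is residually finite; in particular both are hyperlinear. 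The task is then to push hyperlinearity down the central quotient $\tilde K \twoheadrightarrow K = \tilde K/\Z$, the difficulty being that we quotient by an \emph{infinite} central $\Z$ and $K$ itself is not residually finite.

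For the concrete construction I would produce $(\varepsilon,n)$-microstates directly. Given a finite $F \subset K$ and $\varepsilon>0$, lift $F$ to a finite set $\tilde F \subset \tilde K$ and, for each prime $q \neq p$, reduce modulo $q$ to obtain the finite group $K_0(\F_q)$ with one-dimensional centre $\F_q \cong \Z/q$. On the left regular representation of $K_0(\F_q)$, decompose over the characters of this centre and retain the isotypic component $\lambda_q^{\omega}$ on which the central generator $z_0$ (the image of $1 \in \Z[\nicefrac1p]$) acts by the scalar $\omega_q = e^{2\pi i/q}$. A direct computation with the regular representation shows that $\tr(\lambda_q^{\omega}(\pi_q(g)))$ vanishes unless $\pi_q(g) \in \langle z_0\rangle$, in which case it equals the corresponding power of $\omega_q$. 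Since $\omega_q \to 1$, every word trivial in $K$ (that is, landing in $\langle z_0\rangle$) is sent close to the identity and has trace close to one, while every word non-trivial in $K$ lands, for $q$ large, outside $\langle z_0\rangle$ and has trace zero --- except for the central Pr\"ufer elements, which is where the difficulty concentrates.

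The hard part will be exactly this last point. A non-trivial central element $z_k \in A$ of order $p^k$ is the image of $p^{-k} \in \Z[\nicefrac1p]$, and modulo $q$ it lies in $\langle z_0\rangle$ as $z_0^{a_k}$ with $a_k \equiv p^{-k} \pmod q$; its trace in $\lambda_q^{\omega}$ is then the single root of unity $e^{-2\pi i a_k/q}$, which is \emph{not} small. This reflects the fact that $A$ is $p$-divisible and so has no non-trivial finite quotients: the image of $A$ in any finite quotient of $\tilde K$ has order prime to $p$, and no single finite approximation can make the trace of a Pr\"ufer element vanish. The resolution I would pursue is to average. Form a weighted direct sum $\bigoplus_i \lambda_{q_i}^{\omega}$ over many large primes $q_i$, so that the trace of each of the finitely many relevant elements $z_k$ becomes $\sum_i w_i\, e^{-2\pi i a_k(q_i)/q_i}$. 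By equidistribution of the modular inverses $p^{-k} \bmod q$ in $[0,1)$ as $q$ varies over primes, these averages can be driven below $\varepsilon$ simultaneously for all the $z_k$ occurring in $F$, while trivial elements retain trace near one and all remaining elements keep trace zero. This delivers genuine $(\varepsilon,n)$-microstates and proves that $K$ is hyperlinear.

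The same mechanism admits the promised von Neumann algebra formulation, which I would record as the clean conceptual version. The centre of $\mathcal{L}(K)$ contains $\mathcal{L}(A) = L^\infty(\widehat{A})$ with $\widehat{A} \cong \Z_p$, and $\mathcal{L}(K)$ decomposes as a field of von Neumann algebras $\int^\oplus_{\Z_p} \mathcal{L}_\chi(Q)\, d\mu(\chi)$ of twisted group von Neumann algebras, with $\mu$ the Haar measure on $\Z_p$. Residual finiteness of $Q$ realises the individual fibres through finite-dimensional congruence representations, and the Haar integral over $\Z_p$ supplies \emph{exactly} the averaging that forces the trace of every non-trivial central element to vanish, assembling the field into $R_{\omega}$. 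I expect the exact-averaging step --- equivalently, the assembly of this continuous field of fibres into a single trace-preserving embedding $\mathcal{L}(K) \hookrightarrow R_{\omega}$ (or, elementarily, the equidistribution of modular inverses) --- to be the crux of the proof; the remaining ingredients are residual finiteness of $\tilde K$ together with the elementary trace computation for the twisted regular representation.
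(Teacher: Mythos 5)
Your framework (reduction mod $q$, isotypic decomposition over the centre of $K_0(\F_q)$, correct trace formula for that component) is sound and parallels the paper's second proof, and your closing von Neumann algebraic reformulation parallels the paper's first proof. But the step you yourself single out as the crux is false, and your scheme cannot be repaired. The fractions $(\,\overline{p^k}\bmod q)/q$ do \emph{not} equidistribute in $[0,1)$: writing $p^k\overline{p^k}=1+sq$ with $0\le s<p^k$ gives
$$\frac{\overline{p^k}}{q}=\frac{s}{p^k}+\frac{1}{p^kq},\qquad s\equiv -q^{-1} \pmod{p^k},$$
so these fractions accumulate only at the rationals $s/p^k$ with $s$ a \emph{unit} modulo $p$. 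Consequently, whatever primes $q_i$ and weights $w_i$ you choose, the averaged trace assigned to the central element $\nicefrac{j}{p^k}$ is, up to $o(1)$, of the form $\hat\mu(j)=\sum_s \mu(\{s\})\exp(\nicefrac{2\pi i js}{p^k})$ for a probability measure $\mu$ supported on the units of $\Z/p^k\Z$. Such measures can never work: taking $j$ over the multiples of $p^{k-1}$ (i.e.\ the elements $\nicefrac{m}{p}$ of order $p$, all of which occur among words of bounded length), one gets $\sum_{m=0}^{p-1}\hat\mu(mp^{k-1})=p\,\mu\bigl(p\Z/p^k\Z\bigr)=0$, while $\hat\mu(0)=1$; hence some non-trivial element $\nicefrac{m}{p}$ receives a trace of modulus at least $\nicefrac{1}{(p-1)}-o(1)$. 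So no averaging over primes can produce $(\varepsilon,n)$-microstates once $\varepsilon<\nicefrac{1}{(p-1)}$; for $p=2$ the order-two element $\nicefrac12$ has trace $\approx -1$ in every one of your representations.

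The missing idea, which is exactly how the paper proceeds, is to average over \emph{several characters of a single finite quotient} rather than over one character per prime. Since $\Z[\nicefrac1p]$ is a residually finite abelian group, its characters factoring through finite quotients are dense in the Pontrjagin dual; Corollary \ref{appr} then yields one $q$ prime to $p$ together with characters $\beta_1,\dots,\beta_{p^k}$ of $\Z/q\Z$ satisfying $(\beta_l\circ\pi_q)(\nicefrac{j}{p^k})\approx\exp(\nicefrac{2\pi i jl}{p^k})$. The crucial gain is that $l$ runs over \emph{all} residues mod $p^k$, including the multiples of $p$, which your inverses $-q^{-1}$ can never reach; the exact cancellation $\sum_{l=1}^{p^k}\exp(\nicefrac{2\pi i jl}{p^k})=0$ for $p^k\nmid j$ then kills every non-trivial Pr\"ufer trace. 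In your von Neumann algebra picture this is the same defect: you correctly need Haar measure on $\Z_p=\widehat A$, but your construction only produces measures on $\Z_p^{\times}$, and the normalized Haar measure of $\Z_p^{\times}$ has Fourier transform $-\nicefrac{1}{(p-1)}\neq 0$ at the elements of order $p$. Relatedly, the assertion that residual finiteness of $Q$ ``realises the individual fibres'' is not justified: residual finiteness gives finite-dimensional approximations of the untwisted fibre only, and approximating the twisted fibres $L_{\chi\circ\alpha}[Q]$ requires precisely the density of finite characters above --- or, as in the paper's first proof, Lemma \ref{embed} applied to the residually finite group $K_0(\Z[\nicefrac1p])$ with central subgroup $\Z$, whose dense-plus-closed argument in the dual is the actual content of the proof.
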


Given a group $G$ and a $2$-cocycle $\alpha\colon G \times G \to S^1$, we can consider the twisted group von Neumann algebra $L_{\alpha}[G/C]$. Formally, $L_{\alpha}[G]$ is the von Neumann algebra which is generated by the involutive algebra $\C_{\alpha}[G]$ in its GNS-representation with respect to the canonical trace.
Here, $\C_{\alpha}[G]$ is the $\C$-algebra with $\C$-linear basis $\{[g] \mid g\in G\}$ and multiplication $[g]\cdot [h] = \alpha(g,h) [gh]$. The involution is given by $[g]^*
=[g^{-1}]$ and the trace satisfies $\tau([g])= \delta_{g,e}$.

In our special situation, a group $K_0(\Z[\nicefrac 1p])$ is given together with a central subgroup $\Z$. Associated to a (set-theoretic) lift $\sigma\colon K=K_0(\Z[\nicefrac 1p])/\Z \to \Z$ there is a $2$-cocycle $\alpha\colon K \times K \to \Z$ which classifies the extension:
$$0 \to \Z \to K_0(\Z[\nicefrac 1p]) \to K \to 0.$$ Given any character $\beta\colon \Z \to S^1$, we consider the twisted group von Neumann algebras $L_{\beta \circ \alpha}[K]$.

\begin{lemma} \label{embed}
Let $G$ be a group and let $C$ be a central subgroup. The group $G$ is hyperlinear if and only if the twisted group von Neumann algebra $L_{\beta \circ \alpha}[G/C]$ embeds into $R_{\omega}$ for every character $\beta \in \hat C$. 
\end{lemma}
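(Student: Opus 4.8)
The plan is to realise the full group von Neumann algebra $L[G]$ as a field of von Neumann algebras over the dual group $\hat C$ whose fibres are precisely the twisted algebras $L_{\beta\circ\alpha}[G/C]$, and then to combine the characterization of hyperlinearity through $R_\omega$ with the behaviour of $R_\omega$-embeddability under direct integrals. First I would recall the standard equivalence that a group $G$ is hyperlinear if and only if it embeds into the unitary group of $R_\omega$, equivalently if and only if $L[G]$, equipped with its canonical trace, admits a trace-preserving normal embedding $L[G]\hookrightarrow R_\omega$. Thus the lemma is really a statement comparing the $R_\omega$-embeddability of $L[G]$ with that of its fibres.

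Next I would set up the central decomposition. Since $C$ is central in $G$, the subalgebra $L[C]\subset L[G]$ is central, and Fourier analysis on the discrete abelian group $C$ identifies it with $L^\infty(\hat C,\mu)$, where $\mu$ is the normalized Haar measure on the compact group $\hat C$ and the restriction of $\tau$ corresponds to integration against $\mu$. This yields a direct integral (field of von Neumann algebras) decomposition $L[G]\cong\int^\oplus_{\hat C} M_\beta\,d\mu(\beta)$ over the spectrum of $L[C]$. To identify the fibres I would use the set-theoretic lift $\sigma\colon G/C\to G$, $\overline g\mapsto g'$, with $g'h'=\alpha(\overline g,\overline h)(gh)'$, and check that evaluating the central generators $[c]$ at the character $\beta$ turns the $[g']$ into unitaries $u_{\overline g}$ obeying the twisted relations $u_{\overline g}u_{\overline h}=\beta(\alpha(\overline g,\overline h))\,u_{\overline g\,\overline h}$ with $\tau(u_{\overline g})=\delta_{\overline g,e}$; hence $M_\beta\cong L_{\beta\circ\alpha}[G/C]$. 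Concretely this is encoded by the trace-preserving $*$-homomorphisms $\pi_\beta([g])=\beta(g_z)[\overline g]$, whose integral over $\hat C$ recovers $\tau$ by orthogonality of characters.

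With this decomposition in hand I would invoke the input that a field $\int^\oplus_{\hat C} M_\beta\,d\mu(\beta)$ embeds into $R_\omega$ if and only if $M_\beta$ embeds into $R_\omega$ for $\mu$-almost every $\beta$; this is the field-of-von-Neumann-algebras fact available from \cite{MR2072092}. Combined with the two observations above, this immediately gives that $G$ is hyperlinear if and only if $L_{\beta\circ\alpha}[G/C]\hookrightarrow R_\omega$ for $\mu$-almost every $\beta$. The reverse implication of the lemma is then trivial, since ``every $\beta$'' implies ``almost every $\beta$''.

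The main obstacle is upgrading ``almost every $\beta$'' to ``every $\beta$'' in the forward implication, since the measure $\mu$ is non-atomic (e.g. $\hat C=\T$ when $C=\Z$). For this I would show that the set $E=\{\beta\in\hat C\mid L_{\beta\circ\alpha}[G/C]\hookrightarrow R_\omega\}$ is closed. This rests on the microstate description of $R_\omega$-embeddability: $L_{\beta\circ\alpha}[G/C]$ embeds if and only if for every finite $F\subset G/C$ and every $\varepsilon>0$ there are matrix unitaries $(U_x)_{x\in F}$ with $\|U_xU_y-\beta(\alpha(x,y))U_{xy}\|_2<\varepsilon$ and $|\tr(U_x)-\delta_{x,e}|<\varepsilon$. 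Since the topology on $\hat C$ is that of pointwise convergence and $F$ is finite, if $\beta_n\to\beta$ with $\beta_n\in E$ then the cocycle values $\beta_n(\alpha(x,y))$ converge to $\beta(\alpha(x,y))$ uniformly on the finite set $F\times F$; a diagonal argument then converts microstates for $\beta_n$ (for large $n$) into microstates for $\beta$, so $\beta\in E$ and $E$ is closed. Finally, because $\mu$ has full support, a set of full measure is dense, and a closed set of full measure is all of $\hat C$. Hence from $G$ hyperlinear we obtain that $E$ has full measure, so $E=\hat C$; that is, $L_{\beta\circ\alpha}[G/C]\hookrightarrow R_\omega$ for every $\beta\in\hat C$, which completes the argument.
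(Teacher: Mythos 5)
Your proof is correct and follows essentially the same route as the paper: identify $LG$ with a field of twisted group von Neumann algebras $L_{\beta\circ\alpha}[G/C]$ over $(\hat C,\mu)$, invoke the standard fact that the field embeds into $R_{\omega}$ if and only if almost every fibre does, and then upgrade ``almost every'' to ``every'' by showing the set of embeddable fibres is closed and, having full measure for a Haar measure of full support, dense, hence all of $\hat C$. The only difference is the implementation of the closedness step: the paper embeds $L_{\beta\circ\alpha}[G/C]$ into the tracial ultraproduct $\prod_{\omega} L_{\beta_n\circ\alpha}[G/C]$ via the canonical generators and then relies on standard ultraproduct facts, whereas you transfer matricial microstates from $\beta_n$ to $\beta$ directly using continuity of $\beta\mapsto\beta(\alpha(x,y))$ on finite sets; these are two packagings of the same continuity phenomenon (yours is more elementary and self-contained, modulo the observation---which you correctly rely on---that for twisted group algebras approximate relations plus approximate traces on generators control all moments, since every word in the canonical unitaries is a modulus-one scalar times a canonical unitary).
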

\begin{proof}
Since $C$ is central, $LG$ is naturally identified with a field of von Neumann algebras $L_{\beta \circ \alpha}[G/C]$ over the base $\hat C$, where we view the compact group $\hat C$ as a probability space with the Haar measure on $\hat C$. It is a standard fact that this implies that $LG$ is embeddable into $R_{\omega}$ (i.e.\ $G$ is hyperlinear) if and only if $L_{\beta \circ \alpha}[G/C]$ is embeddable into $R_{\omega}$ for almost all $\beta \in \hat C$. In particular, if $LG$ is embeddable into $R_{\omega}$, the set of $\beta$'s for which $L_{\beta \circ \alpha}[G/C]$ embeds is dense in $\hat C$. Since the character $\beta$ varies by definition continuously on $\hat C$, it is easily seen that the set of $\beta$'s for which an embedding exists is also closed. Indeed, if the sequence $(\beta_n)_{n \in \N}$ is convergent to $\beta \in \hat C$, then there exists an embedding:
$\iota\colon L_{\beta \circ \alpha}[G/C] \hookrightarrow \prod_{\omega} L_{\beta_n \circ\alpha}[G/C].$ Hence, the set of $\beta's$ for which $L_{\beta \circ \alpha}[G/C]$ embeds into $R_{\omega}$ is all of $\hat C$, if $LG$ embeds into $R_{\omega}$. This finishes the proof.
\end{proof}

\begin{proof}[First proof of Proposition \ref{hyper}]
Applying the ring homomorphism $r_q \colon \Z[\nicefrac1p] \to \Z/q\Z$ for $q$ prime to $p$, we get sufficiently many homomorphisms from $K_0(\Z[\nicefrac1p])$ to finite groups $K_0(\Z/q\Z)$ to see that $K_0(\Z[\nicefrac1p])$ is residually finite. In particular it is hyperlinear and hence (by Lemma \ref{embed}) the algebras $L_{ \beta \circ \alpha}[K_0(\Z[\nicefrac1p])/\Z]$ are embeddable into $R_{\omega}$ for \emph{all} $\beta \in \hat {\Z}$. In particular, $LK = L_{1 \circ \alpha}[K_0(\Z[\nicefrac1p])/\Z]$ is embeddable into $R_{\omega}$.
\end{proof}
\begin{remark}
The proof yields that every quotient of a hyperlinear group by a central subgroup is again hyperlinear. Note however, that a central quotient of a group with factorization property does not necessarily has Kirchberg's factorization property, as the example above shows.
\end{remark}

In order to make the argument in the first proof above more explicit, we will provide a concrete construction of microstates and try to explain the kind of problems one encounters when trying to prove that $K$ is sofic. Let us first recall the definition of the term \emph{sofic}.

\begin{definition} \label{sofic-group}
A group $G$ is called \emph{sofic} if for any real number $0<\epsilon<1$ and any finite
subset $F \subseteq G$ there exists a natural number $n \in \N$ and a map
$\phi\colon G\to S_n$ with the following properties:
\begin{enumerate}
\item[(i)]   $\fix \phi(g)\phi(h)\,\phi(gh)^{-1}\ge(1-\epsilon)n,\quad$ for any two elements $g,h\in F$,
\item[(ii)]   $\phi(e)=e \in S_n$,
\item[(iii)]   $\fix\phi(g)\le\epsilon n,\quad$ for any $e\ne g\in F$.
\end{enumerate}
\end{definition}

Here, $\fix \sigma$ denotes the number of fixed points of a permutation $\sigma \in S_n$. Sofic groups (with an equivalent definition in terms of Cayley graphs) were first studied by Gromov in  \cite{gromov} and later by B.\ Weiss in \cite{MR1803462} who also coined the name. Later, G.\ Elek and E.\ Szab{\'o} continued a more systematic study in \cite {MR2178069, MR2220572} and proved that sofic groups are hyperlinear.
We always think of $S_n$ as sitting inside $U(n)$ as permutation matrices. The number of fixed points of a permutation (normalized by the size of the set) is now identical with the normalized trace of the permutation matrix. Hence, showing that a group $G$ is sofic is the \emph{same} as finding microstates for $G$ within permutation matrices.
Although we are not able to show that the group $K$ is sofic we will provide microstates in the subgroup $\T^n \rtimes S_n \subset U(n)$.

\begin{lemma} Let $C$ be a residually finite abelian group. The set of characters $$\{\phi \colon C \to \T \mid \exists N \subset_{f.i.} C \colon \phi|_N = 1_N \}$$ is dense in the Pontrjagin dual $\widehat C$.
\end{lemma}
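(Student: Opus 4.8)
The plan is to recognize the set in question, call it $S$, as the torsion subgroup of the compact group $\widehat C$ and then prove density via Pontryagin duality. First I would observe that a character $\phi \colon C \to \T$ lies in $S$ precisely when it has finite image: if $\phi|_N = 1_N$ for a finite-index subgroup $N$, then $\phi$ factors through the finite group $C/N$ and hence has finite image; conversely, if $\phi$ has finite image then $N := \ker \phi$ is of finite index and $\phi|_N = 1_N$. In particular $S$ is a subgroup of $\widehat C$, and proving the lemma amounts to showing that this subgroup is dense.

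Next, to establish density I would pass to the closure $\overline{S}$, which is a closed subgroup of the compact abelian group $\widehat C$, and aim to show $\overline S = \widehat C$. Here I invoke Pontryagin duality for the discrete--compact pair $(C, \widehat C)$: the annihilator map $H \mapsto H^{\perp} = \{c \in C : \phi(c) = 1 \text{ for all } \phi \in H\}$ is an inclusion-reversing bijection between closed subgroups of $\widehat C$ and subgroups of $C$, with $(H^{\perp})^{\perp} = H$ for closed $H$. Consequently $\overline S = \widehat C$ if and only if its annihilator $\overline{S}^{\perp}$ is trivial; and since an annihilator depends only on the closure of the set, $\overline{S}^{\perp} = S^{\perp}$. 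Thus it suffices to prove that the only $c \in C$ with $\phi(c) = 1$ for all $\phi \in S$ is $c = 0$.

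Finally, this reduces exactly to residual finiteness of $C$. Given $c \neq 0$, choose a finite-index subgroup $N \le C$ with $c \notin N$; the image $\bar c$ of $c$ in the finite abelian group $C/N$ is nontrivial, and since the characters of a finite abelian group separate its points, there is $\chi \in \widehat{C/N}$ with $\chi(\bar c) \neq 1$. Pulling $\chi$ back along the quotient map $C \to C/N$ produces $\phi \in S$ with $\phi(c) \neq 1$. Hence $S^{\perp} = \{0\}$, so $\overline S = \widehat C$ and $S$ is dense.

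The only step requiring care is the correct use of the annihilator correspondence, specifically the fact that a closed subgroup of $\widehat C$ whose annihilator in $C$ is trivial must be all of $\widehat C$; everything else is a routine unwinding of definitions. One could alternatively bypass double duality by a direct neighborhood-basis approximation argument on finitely many elements $c_1,\dots,c_m \in C$, but this is more cumbersome and the duality route is cleaner, so I would present the latter.
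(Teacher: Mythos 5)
Your proof is correct and takes essentially the same route as the paper's: both reduce density to the fact that no nonzero $c \in C$ is annihilated by every character factoring through a finite quotient (residual finiteness plus separation of points by characters of finite abelian groups), and then upgrade this to density via Pontryagin duality. The paper packages the duality step by dualizing the embedding $\theta \colon C \hookrightarrow \prod_{N \subset_{f.i.} C} C/N$ to conclude that the restriction map $C \to \widehat{K}$ (for $K = \overline{S}$) is an isomorphism, whereas you invoke the bipolar theorem $\overline{S} = (S^{\perp})^{\perp}$; these are two phrasings of the same mechanism.
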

\begin{proof}
Since $C$ is residually finite, we have an injective map $$\theta\colon C \hookrightarrow  \prod_{N \subset_{f.i.} C} C/N.$$ Let $K$ be a closed subgroup of $\hat C$, such that we have a factorization $$\bigoplus_{N \subset_{f.i.} C} \widehat{C/N} \to K \stackrel{\nu}{\hookrightarrow} \hat C.$$
By duality, the inclusion $\theta$ factorizes over the surjection $\hat{\nu}\colon C \twoheadrightarrow \hat{K}.$ Hence, $\hat{\nu}$ is an isomorphism and so is 
$\nu$. This proves the claim.
\end{proof}

Let $q$ be an integer which is prime to $p$ and let $\pi_q\colon \Z[\nicefrac1p] \to \Z/q\Z$ the natural reduction modulo $q$.

\begin{corollary} \label{appr}
Let $k \geq 1$ be an integer and $\varepsilon>0$. There exists an integer $q \in \N$, prime to $p$, and characters $\beta_l\colon \Z/q\Z \to \T$, for $1 \leq l \leq p^k$ such that

$$|(\beta_l \circ \pi_q)(\nicefrac j{p^k}) - \exp(\nicefrac{2\pi i \cdot j \cdot l}{p^k})| < \varepsilon, \quad \forall 1 \leq j \leq p^k.$$
\end{corollary}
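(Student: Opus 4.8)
The plan is to read this corollary as a concrete, quantitative instance of the preceding density lemma applied to the residually finite abelian group $C=\Z[\nicefrac1p]$. First I would record the targets: for $1\le l\le p^k$ let $\chi_l\in\widehat{\Z[\nicefrac1p]}$ be the character $\chi_l(x)=\exp(2\pi i\,l\,x)$, obtained by restricting to $\Z[\nicefrac1p]\subset\R$ the corresponding character of $\R$. Then $\chi_l(\nicefrac{j}{p^k})=\exp(\nicefrac{2\pi i\,j\,l}{p^k})$, so the assertion is exactly that the $\chi_l$ can be approximated, at the finitely many points $\nicefrac{j}{p^k}$ with $1\le j\le p^k$ and to within $\varepsilon$, by characters factoring through a single reduction $\pi_q$.

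Next I would identify which characters are available. Since $p$ is invertible in $\Z[\nicefrac1p]$, every finite quotient has order prime to $p$; concretely, if $m$ is prime to $p$ then $\Z[\nicefrac1p]/m\Z[\nicefrac1p]\cong\Z/m\Z$ via $\nicefrac1p\mapsto p^{-1}$, so the finite quotients of $\Z[\nicefrac1p]$ are precisely the $\Z/q\Z$ with $q$ prime to $p$, realised by the $\pi_q$, and in particular $\Z[\nicefrac1p]$ is residually finite. A character annihilates a finite-index subgroup exactly when it has the form $\beta\circ\pi_q$ with $q$ prime to $p$ and $\beta\in\widehat{\Z/q\Z}$, and the preceding lemma says these are dense in $\widehat{\Z[\nicefrac1p]}$. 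Because $\Z[\nicefrac1p]$ is discrete, a basic neighbourhood in $\widehat{\Z[\nicefrac1p]}$ is specified by a finite subset of $\Z[\nicefrac1p]$ together with an accuracy; taking that finite subset to be $\{\nicefrac{j}{p^k}\colon 1\le j\le p^k\}$, density produces for each $l$ some $q_l$ prime to $p$ and some $\beta_l'\in\widehat{\Z/q_l\Z}$ with $|(\beta_l'\circ\pi_{q_l})(\nicefrac{j}{p^k})-\chi_l(\nicefrac{j}{p^k})|<\varepsilon$ for all $1\le j\le p^k$.

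The one remaining point — and the step I expect to need the small extra argument — is to replace the individual moduli $q_l$ by a single $q$ valid for every $l$ simultaneously, since the lemma only yields density and hence separate approximations. For this I would set $q=\prod_l q_l$, still prime to $p$, and use that each $\pi_{q_l}$ factors as $\Z[\nicefrac1p]\xrightarrow{\pi_q}\Z/q\Z\to\Z/q_l\Z$ through the natural reduction; composing $\beta_l'$ with this reduction gives $\beta_l\in\widehat{\Z/q\Z}$ with $\beta_l\circ\pi_q=\beta_l'\circ\pi_{q_l}$, so the estimate is preserved and the common $q$ with characters $\beta_l$ is as required. Alternatively, bypassing the lemma one can argue explicitly: choose any $q$ prime to $p$ with $q>\pi p^k/\varepsilon$, let $b_l$ be the nearest integer to $ql/p^k$, and take $\beta_l\colon m\mapsto\exp(2\pi i\,m\,p^k b_l/q)$. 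Since $\pi_q(\nicefrac{1}{p^k})$ is the inverse of $p^k$ modulo $q$, one gets $(\beta_l\circ\pi_q)(\nicefrac{j}{p^k})=\exp(2\pi i\,j\,b_l/q)$, and combining $|b_l/q-l/p^k|\le\nicefrac1{2q}$ with $|e^{2\pi i\theta}-1|\le 2\pi|\theta|$ yields the uniform bound $\pi p^k/q<\varepsilon$. In both routes the essential constraint is the uniformity in $j$: the error is amplified by the factor $j$ up to $p^k$, which forces the modulus to be of order $p^k/\varepsilon$, and this is exactly what both the common-multiple passage and the explicit choice arrange.
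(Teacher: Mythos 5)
Your proposal is correct, and your main route --- applying the preceding density lemma to the residually finite abelian group $C=\Z[\nicefrac1p]$, identifying the characters that kill a finite-index subgroup with those of the form $\beta\circ\pi_q$ for $q$ prime to $p$, and passing to a common modulus $q=\prod_l q_l$ --- is precisely the argument the paper intends, since the paper states the corollary without proof as an immediate consequence of that lemma. Your explicit nearest-integer construction (choosing $q>\pi p^k/\varepsilon$ prime to $p$ and $b_l$ closest to $ql/p^k$) is a correct, self-contained alternative the paper does not give, with the added merit of making the required size of $q$ quantitative.
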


\begin{proof}[Second proof of Proposition \ref{hyper}] Let $S'$ be some finite generating set of $K$, let $S$ be some lift of $S'$ to $K_0(\Z[\nicefrac1p])$ and let $k$ be the highest power of $p$ involved in a denominator of $g_z$ for some $g \in (S \cup S^{-1})^n$. Let $\varepsilon >0$ be arbitrary and choose characters according to Corollary \ref{appr}. In addition, we may choose $q$ large enough so that $(S \cup S^{-1})^n$ is mapped injectively to $K_0(\Z/q\Z)$.

Denote by $\beta\colon \Z/q\Z \to \C^{p^k}$ the direct sum $\oplus_{l=1}^{p^k} \beta_l$. For $1 \leq l \leq p^k$, there exists a natural involution preserving homomorphism of rings $\phi_l \colon \C K_0(\Z[\nicefrac1p]) \to \C_{\beta_l\alpha} K_1(\Z/q\Z)$ given by:
$$\phi_l(g) = (\beta_l \circ \pi_q)(g_z) \cdot \pi_q(\overline g).$$ 
Here, $\C_{\beta_l} K_1(\Z/q\Z)$ denotes the twisted group algebra associated with the $2$-cocycle $\beta_l \alpha \colon K_1(\Z/q\Z) \times K_1(\Z/q\Z) \to S^1$.
The homomorphisms satisfies $\tau (\phi_l(g)) = (\beta_l \circ \pi_q)(g), \forall g \in \Z[\nicefrac 1p] =Z(K_0(\Z[\nicefrac1p]).$
We consider also the ring homomorphism
$$ \Phi = \oplus_{l=1}^{p^k} \phi_l \colon \C K_0(\Z[\nicefrac1p]) 
\to \oplus_{l=1}^{p^k} \C_{\beta_l \alpha} K_1(\Z/q\Z) =: \C_{\beta \alpha} K_1(\Z/q\Z).$$
The algebra $\C_{\beta\alpha} K_1(\Z/q\Z)$ is finite dimensional and carries a natural normalized trace, obtained by averaging the traces on the direct summands. We denote this trace by $\tau \colon \C_{\beta \alpha} K_1(\Z/q\Z) \to \C$.

We observe that
$|\tau(\Phi(\nicefrac j {p^k}))| < \varepsilon$, for all $1 \leq j < p^k$, since $\sum_{l=1}^{p^k} \exp(\nicefrac{2\pi i \cdot l}{p^k}) =0$, and similarly
$|\tau(\Phi(1)) -1| < \varepsilon$, for $1 \in \Z[\nicefrac1p]$.
The algebra $\C_{\beta \alpha} K_1(\Z/q\Z)$ acts naturally on the Hilbert space $\cH=\ell^2(K_1(\Z/q\Z)) \otimes \C^{p^k}$ via
\begin{equation} \label{formu}
(g_1, \cdots,g_{p^k}) \rhd (\delta_h \otimes \delta_j) = \beta_l(\alpha(g_l,h)) \cdot \delta_{g_lh} \otimes \delta_j.
\end{equation}
Here, $\alpha \colon K_1(\Z/q\Z) \times K_1(\Z/q\Z) \to \Z/q\Z$ is the natural $2$-cocycle as before. This presentation preserves the natural trace $\tau$ and combined with $\Phi$, it provides $(\varepsilon,n)$-microstates. Hence, $K$ is hyperlinear.
\end{proof}
\vspace{0.1cm}

In some sense this presentation is not far away from a permutation presentation, which would yield that $K$ is sofic. However, the matrices of our approximation are in the subgroup $\T^r \rtimes S_r \subset U(r)$ rather than $S_r \subset U(r)$, where $r= \dim\, \cH$. (This is obvious from Equation \ref{formu}.) Unfortunatelly, we were not able to remove the \emph{phase} in this approximation. 
Moreover, this kind of approximation is not well-suited for applications. In particular, it remains unclear whether direct finiteness of the group ring over some skew-field, the determinant conjecture (see \cite{MR2178069} and references therin) or the algebraic eigenvalue conjecture (see \cite{MR2417890}) can be proved for the group $K$.

\begin{bibdiv}
\begin{biblist}

\bib{MR564423}{article}{
   author={Abels, Herbert},
   title={An example of a finitely presented solvable group},
   conference={
      title={Homological group theory},
      address={Proc. Sympos., Durham},
      date={1977},
   },
   book={
      series={London Math. Soc. Lecture Note Ser.},
      volume={36},
      publisher={Cambridge Univ. Press},
      place={Cambridge},
   },
   date={1979},
   pages={205--211},
}

\bib{bekval}{book}{
   author={Bekka, Bachir},
   author={de la Harpe, Pierre},
   author={Valette, Alain},
   title={Kazhdan's property ($T$)},
   series={New Mathematical Monographs},
   volume={11},
   publisher={Cambridge University Press},
   place={Cambridge},
   date={2008},
   pages={xiv+472},
}

\bib{caprem}{article}{
   author={Caprace, Pierre-Emmanuel},
   author={R{\'e}my, Bertrand},
   title={Simplicit\'e abstraite des groupes de Kac-Moody non affines},
   language={French, with English and French summaries},
   journal={C. R. Math. Acad. Sci. Paris},
   volume={342},
   date={2006},
   number={8},
   pages={539--544},
 }

\bib{MR1760424}{article}{
   author={Champetier, Christophe},
   title={L'espace des groupes de type fini},
   language={French, with English summary},
   journal={Topology},
   volume={39},
   date={2000},
   number={4},
   pages={657--680},
}

\bib{MR2262894}{article}{
   author={de Cornulier, Yves},
   title={Finitely presentable, non-Hopfian groups with Kazhdan's property
   (T) and infinite outer automorphism group},
   journal={Proc. Amer. Math. Soc.},
   volume={135},
   date={2007},
   number={4},
   pages={951--959 (electronic)},
}

\bib{MR2178069}{article}{
   author={Elek, G{\'a}bor},
   author={Szab{\'o}, Endre},
   title={Hyperlinearity, essentially free actions and $L\sp 2$-invariants.
   The sofic property},
   journal={Math. Ann.},
   volume={332},
   date={2005},
   number={2},
   pages={421--441},
}
\bib{MR2220572}{article}{
   author={Elek, G{\'a}bor},
   author={Szab{\'o}, Endre},
   title={On sofic groups},
   journal={J. Group Theory},
   volume={9},
   date={2006},
   number={2},
   pages={161--171},
}

\bib{MR764305}{article}{
   author={Grigorchuk, Rostislav},
   title={Degrees of growth of finitely generated groups and the theory of
   invariant means},
   language={Russian},
   journal={Izv. Akad. Nauk SSSR Ser. Mat.},
   volume={48},
   date={1984},
   number={5},
   pages={939--985},
}

\bib{MR919829}{article}{
   author={Gromov, Misha},
   title={Hyperbolic groups},
   conference={
      title={Essays in group theory},
   },
   book={
      series={Math. Sci. Res. Inst. Publ.},
      volume={8},
      publisher={Springer},
      place={New York},
   },
   date={1987},
   pages={75--263},
}

\bib{gromov}{article}{
   author={Gromov, Misha},
   title={Endomorphisms of symbolic algebraic varieties},
   journal={J. Eur. Math. Soc. (JEMS)},
   volume={1},
   date={1999},
   number={2},
   pages={109--197},
}

\bib{MR1218321}{article}{
   author={Kirchberg, Eberhard},
   title={On nonsemisplit extensions, tensor products and exactness of group
   $C\sp *$-algebras},
   journal={Invent. Math.},
   volume={112},
   date={1993},
   number={3},
   pages={449--489},
}

\bib{MR1282231}{article}{
   author={Kirchberg, Eberhard},
   title={Discrete groups with Kazhdan's property ${\rm T}$ and
   factorization property are residually finite},
   journal={Math. Ann.},
   volume={299},
   date={1994},
   number={3},
   pages={551--563},
}

\bib{MR2235330}{book}{
   author={Lam, Tsit Yuen},
   title={Serre's problem on projective modules},
   series={Springer Monographs in Mathematics},
   publisher={Springer-Verlag},
   place={Berlin},
   date={2006},
   pages={xxii+401},
}

\bib{MR0003420}{article}{
   author={Malcev, Anatoly},
   title={On isomorphic matrix representations of infinite groups},
   language={Russian, with English summary},
   journal={Rec. Math. [Mat. Sbornik] N.S.},
   volume={8 (50)},
   date={1940},
   pages={405--422},
}

\bib{MR2022373}{article}{
   author={Ozawa, Narutaka},
   title={There is no separable universal $\rm II\sb 1$-factor},
   journal={Proc. Amer. Math. Soc.},
   volume={132},
   date={2004},
   number={2},
   pages={487--490 (electronic)},
}

\bib{MR2072092}{article}{
   author={Ozawa, Narutaka},
   title={About the QWEP conjecture},
   journal={Internat. J. Math.},
   volume={15},
   date={2004},
   number={5},
   pages={501--530},
}

\bib{MR2006539}{book}{
   author={Pisier, Gilles},
   title={Introduction to operator space theory},
   series={London Mathematical Society Lecture Note Series},
   volume={294},
   publisher={Cambridge University Press},
   place={Cambridge},
   date={2003},
   pages={viii+478},
}
%


\bib{MR1813225}{article}{
   author={Shalom, Yehuda},
   title={Bounded generation and Kazhdan's property (T)},
   journal={Inst. Hautes \'Etudes Sci. Publ. Math.},
   number={90},
   date={1999},
   pages={145--168 (2001)},
}

\bib{MR2275645}{article}{
   author={Shalom, Yehuda},
   title={The algebraization of Kazhdan's property (T)},
   conference={
      title={International Congress of Mathematicians. Vol. II},
   },
   book={
      publisher={Eur. Math. Soc., Z\"urich},
   },
   date={2006},
   pages={1283--1310},
}

\bib{MR0472792}{article}{
   author={Suslin, Andrei},
   title={The structure of the special linear group over rings of
   polynomials},
   language={Russian},
   journal={Izv. Akad. Nauk SSSR Ser. Mat.},
   volume={41},
   date={1977},
   number={2},
   pages={235--252, 477},
}

\bib{MR2417890}{article}{
   author={Thom, Andreas},
   title={Sofic groups and Diophantine approximation},
   journal={Comm. Pure Appl. Math.},
   volume={61},
   date={2008},
   number={8},
   pages={1155--1171},
}

\bib{MR1458419}{article}{
   author={Vershik, Anatoly},
   author={Gordon, Evgeniy},
   title={Groups that are locally embeddable in the class of finite groups},
   language={Russian},
   journal={Algebra i Analiz},
   volume={9},
   date={1997},
   number={1},
   pages={71--97},
   translation={
      journal={St. Petersburg Math. J.},
      volume={9},
      date={1998},
      number={1},
      pages={49--67},
   },
}

\bib{MR1803462}{article}{
   author={Weiss, Benjamin},
   title={Sofic groups and dynamical systems},
   note={Ergodic theory and harmonic analysis (Mumbai, 1999)},
   journal={Sankhy\=a Ser. A},
   volume={62},
   date={2000},
   number={3},
   pages={350--359},
}

\end{biblist}
\end{bibdiv} 

\end{document}